\journal{Journal of Pure and Applied Algebra}
\numberwithin{equation}{section}
\theoremstyle{plain}
\newtheorem{theorem}{Theorem}[section]
\newtheorem{lemma}[theorem]{Lemma}
\newtheorem{proposition}[theorem]{Proposition}
\newtheorem{corollary}[theorem]{Corollary}
\theoremstyle{definition}
\newtheorem{definition}[theorem]{Definition}
\newtheorem{remark}[theorem]{Remark}
\begin{document}

\begin{frontmatter}

\ead{paul.lescot@univ-rouen.fr}
\ead[url]{http://www.univ-rouen.fr/LMRS/Persopage/Lescot/}

\title{Absolute algebra \\
III--The saturated spectrum}

\author{Paul Lescot}

\address{LMRS \\
CNRS UMR 6085 \\
UFR des Sciences et Techniques \\
Universit\'e de Rouen \\
Avenue de l'Universit\'e BP12 \\
76801 Saint-Etienne du Rouvray (FRANCE) }

\begin{abstract}
We investigate the algebraic and topological preliminaries to a geometry in characteristic $1$.
\end{abstract}

\begin{keyword}

Characteristic one \sep spectra \sep Zariski topologies

\MSC 06F05 \sep 20M12 \sep 08C99
\end{keyword}

\end{frontmatter}

\newpage

\section{Introduction}
\label{S:intro}

The theory of \it characteristic $1$ semirings \rm (\it i.e. \rm semirings with $1+1=1$)
originated in many different contexts : pure algebra (see \it e.g. \rm LaGrassa's PhD thesis \cite{8}),
idempotent analysis and the study of $\mathbf R_{+}^{max}$
(\cite{1,3}), and Zhu's theory (\cite{12}), itself inspired by considerations of Hopf algebras
(see \cite{11}). Its main motivation is now the Riemann Hypothesis, via adeles and the theory of hyperrings (cf. \cite{2,3,4},
notably \S 6 from \cite{4}). 

For example, it has by now become clear (see \cite{4},Theorem 3.11)
that the classification of finite hyperfield extensions of the Krasner hyperring $K$ is one of the main problems of the theory.
If $H$ denotes an hyperring extension of $K$, $B_{1}$ the smallest characteristic one
semifield and $S$ the sign hyperring, then there
are canonical mappings $B_{1}\rightarrow S \rightarrow K \rightarrow H$,
whence mappings
$$
Spec(H) \rightarrow Spec(K) \rightarrow Spec(S) \rightarrow Spec(B_{1})\,\, ,
$$
thus $Spec(H)$ \lq\lq lies over\rq\rq $Spec(B_{1})$
(see \cite{4}, \S 6, notably diagram $(43)$, where $B_{1}$ is denoted by $\mathbf B$).

The ultimate goal of our investigations is to provide a proper algebraic geometry 
in characteristic one. The natural procedure is to construct \lq\lq affine $B_{1}$--schemes\rq\rq and endow them
with an appropriate topology and a sheaf of semirings ; a suitable glueing procedure will then produce general
\lq\lq $B_{1}$--schemes\rq\rq . This program is not yet completed ; in this paper, we deal 
with a natural first step : the extension to  $B_{1}$--algebras of the notions of spectrum and Zariski topology,
and the fundamental topological properties of these objects. In order to construct a structure sheaf over the spectrum of a $B_{1}$--algebra, 
Castella's localization procedure (\cite{1}) will probably be useful.

As in our two previous papers, we work in the context of $B_{1}$--algebras, \it i.e. \rm
characteristic one semirings. For such an $A$, one may define prime ideals by analogy
to classical commutative algebra. 
In order to define the spectrum of a $B_{1}$--algebra $A$, two candidates readily suggest themselves :
the set $Spec(A)$ of prime (in a suitable sense) congruences, and the set $Pr(A)$ of prime ideals ;
in contrast to the classical situation, these two approaches are not equivalent.
In fact both sets may be equipped with a natural topology of Zariski type (see \cite{10}, Theorem 2.4 and Proposition 3.15),
but they do not in general correspond bijectively to one another ; nevertheless, the subset $Pr_{s}(A)\subseteq Pr(A)$ of \it saturated \rm prime ideals is in
natural bijection with the set of \it excellent \rm prime congruences (see below) on $A$.

It turns out (\S 3) that there is
another, far less obvious, bijection between $Pr_{s}(A)$ and the \it maximal spectrum \rm 
$MaxSpec(A)\subseteq Spec(A)$ of $A$. This mapping is actually an homeomorphism for the natural
(Zariski--type) topologies mentioned above. As a by--product, we find a new point of view on the 
descrption of the maximal spectrum
of the polynomial algebra $B_{1}[x_{1},...,x_{n}]$ found in \cite{9} and \cite{12}.
The homeomorphism in question is actually functorial in $A$ (\S 4).

In \S 5, we show that the theory of the nilradical and of the root of an ideal carry over,
with some precautions, to our setting ; the situation is even better when
one restricts oneself to \it saturated \rm ideals. This allows us, in \S 6, to establish some nice topological properties of
$$
MaxSpec(A)\simeq Pr_{s}(A) \,\, ;
$$
namely, it is $T_{0}$ and quasi--compact (Theorem 6.1), and the open quasi--compact sets constitute a basis stable under finite intersections.
Furthermore this space is \it sober \rm , \it i.e. \rm each irreducible closed set has a (necessarily unique) generic point. In other words, $Pr_{s}(A)$
satisfies the usual properties of a ring spectrum that are used in algebraic geometry
(see \it e.g. \rm the canonical reference \cite{6}):
$Pr_{s}(A)$ is a \it spectral space \rm in the sense of Hochster(\cite{7}).

In the last paragraph, we discuss the particular case of a \it monogenic \rm
$B_{1}$--algebra, that is, a quotient of the polynomial algebra
$B_{1}[x]$ ; in \cite{9}, we had listed the smallest finite such algebras.

In a subsequent work I shall investigate how higher concepts and methods of commutative algebra
(minimal prime ideals, zero divisors, primary decomposition) carry over to 
characteristic one semirings.

\newpage

\section{Definitions and notation}
\label{sec:2}

We shall review some the definitions and notation of our previous two papers 
(\cite{9}, \cite{10}).

$B_{1}=\{0,1\}$ denotes the smallest \it characteristic one semifield \rm ;
the operations of addition and multiplication are the obvious ones, with the 
slight change that
$$
1+1=1 \,\, .
$$

A $B_{1}$--module $M$ is a nonempty set equipped with an action
$$
B_{1}\times M \rightarrow M
$$
satisfying the usual axioms (see \cite{9}, Definition 2.3); as first seen in \cite{12}, Proposition 1
(see also \cite{9}, Theorem 2.5), $B_{1}$--modules can be canonically identified 
with ordered sets having a smallest element ($0$) and in which any two elements $a$ and $b$ 
have a least upper bound ($a+b$).
In particular, one may identify finite $B_{1}$--modules and nonempty finite
lattices.

A (commutative) $B_{1}$--algebra is a $B_{1}$--module equipped with an associative multiplication that
has a neutral element and satisfies the usual axioms relative to addition
(see \cite{9}, Definition 4.1). In the sequel, except when otherwise indicated, $A$
will denote a $B_{1}$--algebra.

An \it ideal \rm $I$ of $A$ is by definition a subset containing $0$,
stable under addition, and having the property that
$$
\forall x\in A \,\,\, \forall y\in I \,\, xy\in I \,\, ;
$$
$I$ is termed prime if $I\neq A$ and
$$
ab\in I \implies a\in I \,\, \text{or} \,\, b\in I \,\, .
$$

By a \it congruence \rm on $A$, we mean an equivalence relation on $A$ compatible
with the operations of addition and multiplication. The trivial congruence $\mathcal C_{0}(A)$
is characterized by the fact that any two elements of $A$ are equivalent under it ; 
the congruences are naturally ordered by inclusion, and $$MaxSpec(A)$$ will denote
the set of maximal nontrivial congruences on $A$.

For $\mathcal R$ a congruence on $A$, we set
$$
I(\mathcal R):=\{x\in A \vert x \,\, \mathcal R \,\, 0\} \,\, ;
$$
it is an ideal of $A$

A nontrivial congruence $\mathcal R$ is termed \it prime \rm if
$$
ab \,\, \mathcal R \,\, 0 \implies a \,\, \mathcal R \,\, 0 \,\, \text{or} \,\, b \,\, \mathcal R \,\, 0 \,\, ;
$$
the set of prime congruences on $A$ is denoted by $Spec(A)$. It turns out that
(see \cite{10}, Proposition 2.3)
$$
MaxSpec(A)\subseteq Spec(A) \,\, .
$$

For $J$ an ideal of $A$, there is a unique smallest congruence $\mathcal R_{J}$ 
such that $J\subseteq I(\mathcal R)$ ; it is denoted by $\mathcal R_{J}$.
Such congruences are termed \it excellent \rm.

An ideal $J$ of $A$ is termed \it saturated \rm if it is of the form
$I(\mathcal R)$ for some congruence $\mathcal R$ ; this is the case if and only if
$J=\overline{J}$, where
$$
\overline{J}:=I(\mathcal R_{J})\,\, .
$$

We shall denote the set of prime ideals of $A$ by $Pr(A)$, and the set 
of saturated prime ideals by $Pr_{s}(A)$.

For $S\subseteq A$, let us set
$$
W(S):=\{\mathcal P \in Pr(A)\vert S \subseteq \mathcal P\}\,\, ,
$$
and
$$
V(S):=\{\mathcal R \in Spec(A)\vert S \subseteq I(\mathcal R)\}\,\, .
$$
As seen in \cite{10}, Theorem 2.4 and Proposition 3.4, the family $(W(S))_{S\subseteq A}$ is the family of closed sets for a topology on $Pr(A)$,
and the family $(V(S))_{S\subseteq A}$ is the family of closed sets for a topology on $Spec(A)$.
We shall always consider $Spec(A)$ and $Pr(A)$ as equipped with these topologies, and their subsets with the induced topologies.

For $M$ a commutative monoid, we define the \it Deitmar spectrum \rm $Spec_{\mathcal D}(M)$
as the set of prime ideals (including $\emptyset$) of $M$ (in \cite{5}, this is denoted by $Spec \,\, \mathbf F_{M}$).
We define $\mathcal F(M)=B_{1}[M]$ as the 
\lq\lq monoid algebra of $M$ over $B_{1}$ \rq\rq ;
the functor $\mathcal F$ is adjoint to the forgetful functor from the category of $B_{1}$--algebras to the
category of monoids (for the details, see \cite{9}, \S 5).
Furthermore, there is an explicit canonical bijection between $Spec_{\mathcal D}(M)$
and a certain subset of $Spec (\mathcal F(M))$ (see \cite{10}, Theorem 4.2).

For $S$ a subset of $A$,
let $<S>$ denote the intersection of all the ideals of $A$ containing $S$ (there is always
at least one such ideal : $A$ itself). It is clear that $<S>$ is an ideal of $A$, and therefore
is the smallest ideal of $A$ containing $S$. As in ring theory, one may see that
$$
<S>=\{\sum_{j=1}^{n}a_{j}s_{j}\vert n\in \mathbf N, (a_{1},...,a_{n})\in A^{n},(s_{1},...,s_{n})\in S^{n}\}.
$$

We shall denote by $\mathcal{SP}$ the category whose objects are spectra of $B_{1}$--algebras and whose morphisms
are the continuous maps between them.
\newpage
\section{A new description of maximal congruences}
\label{sec:3}

Let $A$ denote a $B_{1}$--algebra.

For $\mathcal P$ a saturated prime ideal of $A$, let us define a relation 
$\mathcal S_{\mathcal P}$ on $A$ by :
$$
x\mathcal S_{\mathcal P} y \equiv (x\in \mathcal P \,\, \text{and} \,\, y\in \mathcal P) \,\, 
\text{or}
 (x\notin \mathcal P \,\, \text{and} \,\, y\notin \mathcal P)\,\, .
$$
Then $\mathcal S_{\mathcal P}$ is a congruence on $A$ : if $x\mathcal S_{\mathcal P} y$ and 
$x^{'}\mathcal S_{\mathcal P} y{'}$, then one and only one of the following holds :
\begin{enumerate}
\item[(i)] $x\in \mathcal P$, $y\in \mathcal P$, $x^{'}\in \mathcal P$ and $y^{'}\in \mathcal P$ \,\, ,
\item[(ii)] $x\in \mathcal P$, $y\in \mathcal P$, $x^{'}\notin \mathcal P$ and $y^{'}\notin \mathcal P$ \,\, ,
\item[(iii)] $x\notin \mathcal P$, $y\notin \mathcal P$, $x^{'}\in \mathcal P$ and $y^{'}\in \mathcal P$ \,\, ,
\item[(iv)] $x\notin \mathcal P$, $y\notin \mathcal P$, $x^{'}\notin \mathcal P$ and $y^{'}\notin \mathcal P$ \,\, .
\end{enumerate}

In case $(i)$, $x+x^{'}\in \mathcal P$ and $y+y^{'}\in \mathcal P$, whence
$x+x^{'}\mathcal S_{\mathcal P}y+y^{'}$ ; in cases $(ii)$ and $(iv)$, 
$x+x^{'}\notin \mathcal P$ and $y+y^{'}\notin \mathcal P$ (as $\mathcal P$ is saturated), whence
$x+x^{'}\mathcal S_{\mathcal P}y+y^{'}$. Case $(iii)$ is symmetrical relatively to case $(ii)$, therefore, in all
cases, $x+x^{'}\mathcal S_{\mathcal P}y+y^{'}$ : $\mathcal S_{\mathcal P}$ is compatible with addition.

In cases $(i)$, $(ii)$ and $(iii)$, $xx^{'}\in \mathcal P$ and $yy^{'}\in \mathcal P$, whence 
$xx^{'}\mathcal S_{\mathcal P}yy^{'}$ ; in case $(iv)$ $xx^{'}\notin \mathcal P$ and $yy^{'}\notin \mathcal P$
(as $\mathcal P$ is prime), whence also $xx^{'}\mathcal S_{\mathcal P}yy^{'}$ : $\mathcal S_{\mathcal P}$
is compatible with multiplication, hence is a congruence on $A$.

As $0\in\mathcal P$ and $1\notin \mathcal P$, $0\not{\mathcal S_{\mathcal P}} 1$, therefore ${\mathcal S_{\mathcal P}}$
is nontrivial ; but each $x\in A$ is either in $\mathcal P$ (whence $x{\mathcal S_{\mathcal P}}0$) or not
(whence $x{\mathcal S_{\mathcal P}}1$). It follows that
$$
\displaystyle\frac{A}{\mathcal S_{\mathcal P}}=\{\bar{0},\bar{1}\}\simeq B_{1} \,\, ;
$$
in particular, $\mathcal S_{\mathcal P}$ is maximal : $\mathcal S_{\mathcal P}\in MaxSpec(A)$.

Obviously, $I(\mathcal S_{\mathcal P})=\mathcal P$. 

Furthermore, let $(x,y)\in A^{2}$
be such that $x\mathcal R_{\mathcal P} y$ ; then there is $z\in \mathcal P$ such that
$x+z=y+z$. If $x\in \mathcal P$ then $y+z=x+z\in \mathcal P$, whence $y\in \mathcal P$
(as $y+(y+z)=y+z$ and $\mathcal P$ is saturated) ; symmetrically, $y\in \mathcal P$ implies $x\in\mathcal P$, whence the assertions $(x\in\mathcal P)$ and $(y\in\mathcal P)$ are equivalent, and
$x\mathcal S_{\mathcal P} y$. We have shown that 
$$
\mathcal R_{\mathcal P}\leq \mathcal S_{\mathcal P} \,\, .
$$

We shall denote by $\alpha_{A}$ the mapping
\begin{eqnarray}
\alpha_{A}
&:& Pr_{s}(A)\rightarrow MaxSpec(A) \nonumber\\
&&\mathcal P\mapsto \mathcal S_{\mathcal P} \,\, .\nonumber
\end{eqnarray}

Let $\mathcal R\in MaxSpec(A)$ ; then $\mathcal R\in Spec(A)$, whence $I(\mathcal R)$ is prime ;
by Theorem 3.8 of \cite{10}, it is saturated,
\it i.e. \rm $I(\mathcal R)\in Pr_{s}(A)$. Let us set
$$
\beta_{A}(\mathcal R):=I(\mathcal R)\,\, .
$$
\begin{theorem}The mappings
$$
\alpha_{A}: Pr_{s}(A) \mapsto MaxSpec(A)
$$
and
$$
\beta_{A}: MaxSpec(A) \mapsto Pr_{s}(A)
$$
are bijections, inverse of one another. They are continuous for the topologies on $Pr_{s}(A)$ and $MaxSpec(A)$ induced
by the topologies on $Pr(A)$ and $Spec(A)$ mentioned above, whence $Pr_{s}(A)$ and $MaxSpec(A)$ are homeomorphic.
\end{theorem}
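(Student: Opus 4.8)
The plan is to prove the theorem in three stages: first establish that $\alpha_A$ and $\beta_A$ are mutually inverse bijections, then prove continuity of each map, and finally conclude the homeomorphism.

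For the bijection, the key observations have essentially been laid out in the text preceding the statement. I would first verify that $\beta_A \circ \alpha_A = \mathrm{id}_{Pr_s(A)}$: for $\mathcal P \in Pr_s(A)$, we computed above that $I(\mathcal S_{\mathcal P}) = \mathcal P$, so $\beta_A(\alpha_A(\mathcal P)) = I(\mathcal S_{\mathcal P}) = \mathcal P$. The genuinely nontrivial direction is $\alpha_A \circ \beta_A = \mathrm{id}_{MaxSpec(A)}$, i.e. that for $\mathcal R \in MaxSpec(A)$ one has $\mathcal S_{I(\mathcal R)} = \mathcal R$. I would argue as follows. Set $\mathcal P := I(\mathcal R)$, which lies in $Pr_s(A)$ by Theorem 3.8 of \cite{10}. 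The inclusion $\mathcal R \le \mathcal S_{\mathcal P}$ follows from the fact already established above that $\mathcal R_{\mathcal P} \le \mathcal S_{\mathcal P}$ together with $\mathcal R_{\mathcal P} \le \mathcal R$ (since $\mathcal R_{\mathcal P}$ is the smallest congruence whose associated ideal contains $\mathcal P = I(\mathcal R)$). Wait---more carefully, I need $\mathcal R \le \mathcal S_{\mathcal P}$, so I would instead show directly that $x \mathcal R y$ forces $x \mathcal S_{\mathcal P} y$: if $x \mathcal R y$ then $xy' \mathcal R yy'$ etc., but the cleanest route is that $x \mathcal R 0$ iff $x \in \mathcal P$ by definition of $\mathcal P$, and one checks $x \mathcal R y$ implies $x,y$ are simultaneously in or out of $\mathcal P$. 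Granting $\mathcal R \le \mathcal S_{\mathcal P}$, maximality of $\mathcal R$ and nontriviality of $\mathcal S_{\mathcal P}$ (shown above) force equality $\mathcal R = \mathcal S_{\mathcal P}$, completing the bijection.

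For continuity, I would work with the closed sets. The topology on $Pr(A)$ has closed sets $W(S)$ and that on $Spec(A)$ has closed sets $V(S)$; the induced topologies on $Pr_s(A)$ and $MaxSpec(A)$ have closed sets $W(S) \cap Pr_s(A)$ and $V(S) \cap MaxSpec(A)$ respectively. To show $\beta_A$ is continuous I would verify that $\beta_A^{-1}(W(S) \cap Pr_s(A))$ is closed in $MaxSpec(A)$; unwinding the definitions, $\mathcal R \in \beta_A^{-1}(W(S))$ means $I(\mathcal R) \in W(S)$, i.e. $S \subseteq I(\mathcal R)$, which is exactly the condition $\mathcal R \in V(S)$. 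Hence $\beta_A^{-1}(W(S) \cap Pr_s(A)) = V(S) \cap MaxSpec(A)$, which is closed, so $\beta_A$ is continuous. The continuity of $\alpha_A$ is then either checked symmetrically---showing $\alpha_A^{-1}(V(S) \cap MaxSpec(A)) = W(S) \cap Pr_s(A)$ via the identity $S \subseteq I(\mathcal S_{\mathcal P}) = \mathcal P$ iff $\mathcal P \in W(S)$---or obtained for free once continuity of $\beta_A$ and the bijection are in hand.

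The main obstacle I anticipate is the equality $\mathcal S_{I(\mathcal R)} = \mathcal R$ in the surjectivity direction: everything else is a matter of chasing definitions, but this step genuinely uses both that $\mathcal R$ is maximal (not merely prime) and that $I(\mathcal R)$ is saturated, and it is where the content of the theorem lives. The comparison of topologies, by contrast, reduces to the clean identity that $S \subseteq I(\mathcal R)$ is the common defining condition for membership in $V(S)$ on the one hand and for $\beta_A(\mathcal R) \in W(S)$ on the other, so the two families of closed sets correspond exactly under the bijection; this makes the homeomorphism claim follow immediately once both maps are shown continuous.
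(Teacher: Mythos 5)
Your proposal is correct and follows essentially the same route as the paper: the identity $I(\mathcal S_{\mathcal P})=\mathcal P$ gives $\beta_A\circ\alpha_A=\mathrm{id}$, the direct verification that $x\,\mathcal R\,y$ forces $x$ and $y$ to lie simultaneously in or out of $I(\mathcal R)$ gives $\mathcal R\le\mathcal S_{I(\mathcal R)}$ and hence equality by maximality, and continuity in both directions reduces to the observation that $S\subseteq I(\mathcal R)$ is the common membership condition for $V(S)$ and for $\beta_A(\mathcal R)\in W(S)$. Your self-correction away from the $\mathcal R_{\mathcal P}\le\mathcal S_{\mathcal P}$ detour was the right call, and your explicit appeal to the nontriviality of $\mathcal S_{\mathcal P}$ when invoking maximality is if anything slightly more careful than the paper's wording.
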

\begin{proof}
Let $\mathcal R\in MaxSpec(A)$ ; then
$$
\alpha_{A}(\beta_{A}(\mathcal R))=\alpha_{A}(I(\mathcal R))=\mathcal S_{I(\mathcal R)}\,\, .
$$

Let us assume $x\mathcal R y$ ; then, if $x\in I(\mathcal R)$ one
has $x \mathcal R 0$, whence $y \mathcal R 0$ and 
$y\in I(\mathcal R)$; by symmetry, $y\in I(\mathcal R)$ implies $x\in I(\mathcal R)$, thus
$(x\in I(\mathcal R))$ and $(y\in I(\mathcal R))$ are equivalent, \it i.e.  \rm $x\mathcal S_{I(\mathcal R)} y$.
We have proved that $\mathcal R \leq \mathcal S_{I(\mathcal R)}$. 
As $\mathcal R$ is maximal, we have $\mathcal R=\mathcal S_{I(\mathcal R)}$, whence 
$$
\alpha_{A}(\beta_{A}(\mathcal R))=\mathcal S_{I(\mathcal R)}=\mathcal R \,\, ,
$$
and
$$
\alpha_{A}\circ \beta_{A}=Id_{MaxSpec(A)} \,\, .
$$

Let now $\mathcal P \in Pr_{s}(A)$ ; then
\begin{eqnarray*}
 (\beta_{A}\circ \alpha_{A})(\mathcal P)
&=&\beta_{A}(\alpha_{A}(\mathcal P)) \\
&=&\beta_{A}(\mathcal S_{\mathcal P}) \\
&=&I(\mathcal S_{\mathcal P}) \\
&=&\mathcal P \,\, ,\\
\end{eqnarray*}
whence
$$
\beta_{A}\circ \alpha_{A}=Id_{Pr_{s}(A)} \,\, ,
$$
and the first statement follows.

Let now $F$  denote a closed subset of $Pr_{s}(A)$ ; then $F=G\cap Pr_{s}(A)$
for $G$ a closed subset of $Pr(A)$ and $G=W(S):=\{\mathcal P\in Pr(A)\vert S\subseteq \mathcal P\}$
for some subset $S$ of $A$. But then, for $\mathcal R\in MaxSpec(A)$,
$\mathcal R\in \beta_{A}^{-1}(F)$ if and only if $\beta_{A}(\mathcal R)\in F$, \it i.e. \rm
$I(\mathcal R)\in G \cap Pr_{s}(A)$, that is $I(\mathcal R)\in G$, or $S\subseteq I(\mathcal R)$,
which means $\mathcal R\in V(S)$. Thus
$$
\beta_{A}^{-1}(F)=V(S)\cap MaxSpec(A)
$$
is closed in $MaxSpec(A)$. We have shown the continuity of
$\beta_{A}$.

Let now $H\subseteq MaxSpec(A)$ be closed ; then $H=MaxSpec(A)\cap L$ for some closed subset
$L$ of $Spec(A)$, and $L=V(T)$ for some subset $T$ of $A$. Then a saturated prime ideal $\mathcal P$ of $A$ 
belongs to $\alpha_{A}^{-1}(H)$ if and only if $\alpha_{A}(\mathcal P)\in H$, that is
$$\mathcal S_{\mathcal P}\in MaxSpec(A) \cap L\,\, ,$$
 \it i.e. \rm $$\mathcal S_{\mathcal P}\in V(T)$$
or $T\subseteq I(\mathcal S_{\mathcal P})$. But $I(\mathcal S_{\mathcal P})=\mathcal P$ whence $\mathcal P$ 
belongs to $\alpha_{A}^{-1}(H)$ if and only if $T\subseteq \mathcal P$, that is
$$
\alpha_{A}^{-1}(H)=W(T)\cap  Pr_{s}(A) \,\, ,
$$
which is closed in $Pr_{s}(A)$.
\end{proof}

Let us consider the special case in which $A$ is in the image of $\mathcal F$ : $A=\mathcal F(M)$, for $M$ 
a commutative monoid.
Let $P$ be a prime ideal of $M$ ; as seen in \cite{10}, Theorem 4.2,  $\tilde{P}$ is a saturated prime ideal in $A$,
 and one obtains in this way a bijection
between $Spec_{\mathcal D}(M)$ and $Pr_{s}(A)$. The following is now obvious :
\begin{theorem}The mapping
\begin{eqnarray}\psi_{M}
&:& Spec_{\mathcal D}(M)\rightarrow MaxSpec(\mathcal F(M)) \nonumber \\
&& P\mapsto \alpha_{\mathcal F(M)}(\tilde{P}) \,\, \nonumber
\end{eqnarray}
is a bijection.
\end{theorem}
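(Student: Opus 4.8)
The plan is to exhibit $\psi_{M}$ as a composite of two maps, each of which has already been shown to be a bijection, so that no argument beyond citing earlier results is needed.

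First I would record that, directly from its definition, $\psi_{M}$ factors as
$$
Spec_{\mathcal D}(M) \xrightarrow{\;P\mapsto \tilde P\;} Pr_{s}(\mathcal F(M)) \xrightarrow{\;\alpha_{\mathcal F(M)}\;} MaxSpec(\mathcal F(M)) \,\, ,
$$
since $\psi_{M}(P)=\alpha_{\mathcal F(M)}(\tilde P)$ for every prime ideal $P$ of $M$. The point that makes this factorisation legitimate is that $\tilde P$ really lands in $Pr_{s}(\mathcal F(M))$, which is precisely the content of \cite{10}, Theorem 4.2; that same result moreover asserts that $P\mapsto \tilde P$ is a bijection from $Spec_{\mathcal D}(M)$ onto $Pr_{s}(\mathcal F(M))$.

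Second, I would apply Theorem 3.1 with $A=\mathcal F(M)$: the map $\alpha_{\mathcal F(M)}\colon Pr_{s}(\mathcal F(M))\to MaxSpec(\mathcal F(M))$ is a bijection, with inverse $\beta_{\mathcal F(M)}$. Since the target of the first map in the display coincides with the source of the second, $\psi_{M}$ is a composite of two bijections and is therefore itself a bijection. There is essentially no obstacle to overcome here: all of the substance is carried by the two cited statements, and the only thing to verify explicitly is the matching of codomain and domain, which is immediate. Should an explicit inverse be wanted, one would describe it as the composite sending $\mathcal R\in MaxSpec(\mathcal F(M))$ first to $I(\mathcal R)=\beta_{\mathcal F(M)}(\mathcal R)\in Pr_{s}(\mathcal F(M))$ and then, via the inverse of $P\mapsto\tilde P$ from \cite{10}, Theorem 4.2, to the unique prime ideal $P$ of $M$ with $\tilde P=I(\mathcal R)$.
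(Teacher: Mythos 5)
Your proposal is correct and is essentially identical to the paper's own argument: the paper likewise treats the theorem as an immediate consequence of the bijection $P\mapsto\tilde P$ between $Spec_{\mathcal D}(M)$ and $Pr_{s}(\mathcal F(M))$ from \cite{10}, Theorem 4.2, composed with the bijection $\alpha_{\mathcal F(M)}$ of Theorem 3.1. You merely make the composition and the inverse explicit, which the paper leaves as ``obvious.''
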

Two particular cases are of special interest :
\begin{enumerate}
\item $M$ is a group ; then $Spec_{\mathcal D}(M)=\{\emptyset\}$, whence $MaxSpec(\mathcal F(G))$ has exactly
one element.
\item $M=C_{n}:=<x_{1},...,x_{n}>$ is the free monoid on $n$ variables
$x_{1},...,x_{n}$.
Then the elements of $Spec_{\mathcal D}(M)$ are the $(P_{J})_{J\subseteq\{1,...,n\}}$, where
$$
P_{J}:=\bigcup_{j\in J}x_{j}C_{n}
$$
(a fact that was already used in \cite{10}, Example 4.3).
Then $$\psi_{M}(P_{J})=\alpha_{\mathcal F(M)}(\tilde{P_{J}})=\mathcal S_{\tilde{P_{J}}}$$
whence $x \psi_{M}(P_{J}) y$ if and only if either ($x\in \tilde{P_{J}}$ and $y\in \tilde{P_{J}}$)
or ($x\notin \tilde{P_{J}}$ and $y\notin \tilde{P_{J}}$). But we have seen in \cite{9}, Theorem 4.5,
that 
$$
\mathcal F(M)=B_{1}[x_{1},...,x_{n}]
$$ 
could be identified with the set of finite formal sums of elements of 
$M$.
Obviously, an element $x$ of $\mathcal F(M)$ belongs to $\tilde{P_{J}}$ if and only if at least one of its 
components involves at least one
factor $x_{j}(j\in J)$. It is now clear that, using the notation of \cite{9}, Definition 4.6
and Theorem 4.7, 

$$\psi_{M}(P_{J})=\widetilde{ }_{J}\,\, .$$ 
We hereby recover the description of 
$MaxSpec(B_{1}[x_{1},...,x_{n}])$ given
in \cite{9}(Theorems 4.7, 4.8 and 4.10).
\end{enumerate}

The following result will be useful

\begin{theorem}Any proper saturated ideal of a $B_{1}$--algebra $A$ is contained in a saturated prime ideal of $A$.
 \end{theorem}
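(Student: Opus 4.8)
The plan is to run the classical argument---every proper ideal lies in a maximal, hence prime, ideal---but on the side of \emph{congruences} rather than ideals. The reason for this detour is that saturation is not in general preserved under unions of chains of ideals, so a direct Zorn argument on saturated ideals stalls; nontriviality of congruences, by contrast, passes to unions without trouble.

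First I would translate the problem. Writing $I$ for the given proper saturated ideal, saturation gives $I = I(\mathcal{R}_{I})$, so it is enough to find a saturated prime ideal containing $I(\mathcal{R}_{I})$. The properness of $I$, i.e. $1 \notin I$, says exactly that $\mathcal{R}_{I}$ is nontrivial: if $1 \,\mathcal{R}\, 0$ then for every $x$ one has $x = x\cdot 1 \,\mathcal{R}\, x\cdot 0 = 0$, forcing $\mathcal{R}$ to be trivial, so a congruence $\mathcal{R}$ is nontrivial precisely when the pair $(0,1)$ does not belong to it.

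Next I would enlarge $\mathcal{R}_{I}$ to a maximal nontrivial congruence by Zorn's lemma, applied to the poset of nontrivial congruences containing $\mathcal{R}_{I}$, ordered by inclusion (nonempty, as it contains $\mathcal{R}_{I}$). The single step needing verification is the existence of upper bounds for chains: the union of a chain of congruences is again a congruence, and it stays nontrivial because the forbidden pair $(0,1)$ could lie in the union only by lying in one of its members, which is excluded. A maximal element $\mathcal{S}$ of this poset is clearly maximal among \emph{all} nontrivial congruences, so $\mathcal{S} \in MaxSpec(A)$.

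Finally I would read off the result. Since $MaxSpec(A) \subseteq Spec(A)$, the congruence $\mathcal{S}$ is prime, so $I(\mathcal{S})$ is a prime ideal, and by Theorem 3.8 of \cite{10} it is saturated; thus $I(\mathcal{S}) = \beta_{A}(\mathcal{S}) \in Pr_{s}(A)$. From $\mathcal{R}_{I} \le \mathcal{S}$ and the evident monotonicity of $\mathcal{R} \mapsto I(\mathcal{R})$ (as $x\,\mathcal{R}\,0$ implies $x\,\mathcal{S}\,0$) we obtain $I = I(\mathcal{R}_{I}) \subseteq I(\mathcal{S})$, which is the desired containment. The only genuinely delicate point in the whole argument is the stability of nontriviality under unions of chains; once the congruence-side reformulation is in place, the rest is formal.
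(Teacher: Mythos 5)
Your proof is correct and follows essentially the same route as the paper: pass from the proper saturated ideal $I$ to the nontrivial congruence $\mathcal{R}_{I}$, enlarge it by Zorn's Lemma to some $\mathcal{S}\in MaxSpec(A)$, and return via $I(\mathcal{S})$, which is prime and saturated. The only differences are cosmetic: you spell out the chain condition in Zorn's Lemma, which the paper leaves implicit, and you assemble $\beta_{A}$ from its ingredients ($MaxSpec(A)\subseteq Spec(A)$ and Theorem 3.8 of \cite{10}) where the paper simply invokes its Theorem 3.1.
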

\begin{proof} Let $J$ be a proper saturated ideal of $A$ ; as $I(\mathcal R_{J})=\overline{J}=J\neq A$,
 $\mathcal R_{J}\neq \mathcal C_{0}(A)$. By Zorn's Lemma, one has $\mathcal R_{J}\leq \mathcal R$
for some $\mathcal R\in MaxSpec(A)$. According to Theorem 2.1, $\mathcal R=\alpha_{A}(\mathcal P)=\mathcal S_{\mathcal P}$ for a 
saturated prime ideal $\mathcal P$ of $A$, therefore $\mathcal R_{\mathcal J}\leq \mathcal S_{\mathcal P}$
and
$$
J=\overline{J}=I(\mathcal R_{J})\subseteq I(\mathcal S_{\mathcal P})=\mathcal P\,\, .
$$
\end{proof}
\newpage
\section{Functorial properties of spectra}
\label{sec:4}

Let $\varphi : A \rightarrow C$ denote a morphism of $B_{1}$--algebras, and let $\mathcal R\in Spec(C)$.
We define a binary relation $\tilde{\varphi}(\mathcal R)$ on $A$ by :
$$
\forall (a,a^{'})\in A^{2} \,\, a \tilde{\varphi}(\mathcal R) a^{'}\equiv \varphi(a) \mathcal R \varphi(a^{'}) \,\, .
$$
It is clear that $\tilde{\varphi}(\mathcal R)$ is a congruence on $A$, and that
$$
I(\tilde{\varphi}(\mathcal R))=\varphi^{-1}(I(\mathcal R))\,\, .
$$
In particular $I(\tilde{\varphi}(\mathcal R))$ is a prime ideal of $A$,
hence $\tilde{\varphi}(\mathcal R)\in Spec(A)$ : $\tilde{\varphi}$ maps
$Spec(C)$ into $Spec(A)$.
Let $F:=V(S)$ be a closed subset of 
$Spec(A)$, and let $\mathcal R \in Spec(C)$ ; then $\mathcal R\in \tilde{\varphi}^{-1}(F)$
if and only if $\tilde{\varphi}(\mathcal R)\in F$, that is $S\subseteq I(\tilde{\varphi}(\mathcal R))$,
or $S\subseteq \varphi^{-1}(I(\mathcal R))$, \it i.e. \rm $\varphi(S)\subseteq I(\mathcal R)$,
or $\mathcal R\in V(\varphi(S))$. Therefore $\tilde{\varphi}^{-1}(F)=V(\varphi(S))$
is closed in $Spec(C)$ : $\tilde{\varphi}$ is continuous.

Furthermore, for $\varphi : A \rightarrow C$ and $\psi : C \rightarrow D$ one has
$$
\widetilde{\psi \circ \varphi}=\tilde{\varphi}\circ \tilde{\psi} : Spec(D)\rightarrow Spec(A)\,\, .
$$
It follows that the equations $\mathcal H(A)=Spec(A)$ and $\mathcal H(\varphi)=\tilde{\varphi}$
define a contravariant functor $\mathcal H$ from $\mathcal Z_{a}$ to $\mathcal{SP}$.

Let $J$ denote an ideal in $C$, and let us assume $a\mathcal R_{\varphi^{-1}(J)} a^{'}$ ; then there is
an $x\in \varphi^{-1}(J)$ with $a+x=a^{'}+x$.
Now $\varphi(x)\in J$ and
\begin{eqnarray}
\varphi(a)+\varphi(x)
&=&\varphi(a+x) \nonumber \\
&=&\varphi(a^{'}+x) \nonumber  \\
&=&\varphi(a^{'})+\varphi(x) \,\, ,\nonumber
\end{eqnarray}
whence $\varphi(a)\mathcal R_{J}\varphi(a^{'})$ and $a\tilde{\varphi}(\mathcal R_{J})a^{'}$.
We have established
\begin{proposition}
Let $A$ and $C$ denote $B_{1}$--algebras, $\varphi : A \rightarrow C$ a morphism
and $J$ an ideal of $C$ : then
$$
\mathcal R_{\varphi^{-1}(J)}\leq \tilde{\varphi}(\mathcal R_{J}) \,\, .
$$
\end{proposition}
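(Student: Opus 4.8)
The plan is to deduce the inequality from the minimality property that defines $\mathcal R_{\varphi^{-1}(J)}$, rather than to track related pairs by hand. Recall that $\mathcal R_{\varphi^{-1}(J)}$ is the smallest congruence $\mathcal R$ on $A$ for which $\varphi^{-1}(J)\subseteq I(\mathcal R)$. Hence, to obtain $\mathcal R_{\varphi^{-1}(J)}\le \tilde{\varphi}(\mathcal R_{J})$, it will suffice to exhibit the right-hand side as a congruence on $A$ satisfying $\varphi^{-1}(J)\subseteq I(\tilde{\varphi}(\mathcal R_{J}))$; the whole argument then reduces to this single containment. The first requirement is free, since $\tilde{\varphi}(\mathcal R)$ is a congruence on $A$ for every congruence $\mathcal R$ on $C$.

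For the containment I would invoke the identity $I(\tilde{\varphi}(\mathcal R))=\varphi^{-1}(I(\mathcal R))$ recorded at the start of this section (valid for an arbitrary congruence $\mathcal R$ on $C$), applied to $\mathcal R=\mathcal R_{J}$. This gives $I(\tilde{\varphi}(\mathcal R_{J}))=\varphi^{-1}(I(\mathcal R_{J}))=\varphi^{-1}(\overline{J})$. Since $J\subseteq \overline{J}=I(\mathcal R_{J})$ holds by the very definition of $\mathcal R_{J}$, taking preimages under $\varphi$ yields $\varphi^{-1}(J)\subseteq \varphi^{-1}(\overline{J})=I(\tilde{\varphi}(\mathcal R_{J}))$, exactly as wanted. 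Minimality of $\mathcal R_{\varphi^{-1}(J)}$ then delivers $\mathcal R_{\varphi^{-1}(J)}\le \tilde{\varphi}(\mathcal R_{J})$.

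A more computational alternative avoids that identity and uses instead the explicit description of $\mathcal R_{J}$ already employed elsewhere in the paper: $a\,\mathcal R_{J}\,a'$ holds precisely when $a+x=a'+x$ for some $x\in J$. Starting from $a\,\mathcal R_{\varphi^{-1}(J)}\,a'$, I would pick $x\in \varphi^{-1}(J)$ with $a+x=a'+x$, apply $\varphi$ and use that it preserves addition to get $\varphi(a)+\varphi(x)=\varphi(a')+\varphi(x)$ with $\varphi(x)\in J$, whence $\varphi(a)\,\mathcal R_{J}\,\varphi(a')$, that is $a\,\tilde{\varphi}(\mathcal R_{J})\,a'$ by the definition of $\tilde{\varphi}$. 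Quantifying over all such pairs gives the inclusion of congruences.

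The argument is short in either form, and I do not foresee a real obstacle; the only point requiring care is to have in hand the correct tool, namely either the identity $I(\tilde{\varphi}(\mathcal R))=\varphi^{-1}(I(\mathcal R))$ or the generator-level description of $\mathcal R_{J}$ through the relation $a+x=a'+x$ with $x\in J$. I would not expect equality in general, since $\varphi^{-1}(\overline{J})=I(\tilde{\varphi}(\mathcal R_{J}))$ can be strictly larger than $\overline{\varphi^{-1}(J)}=I(\mathcal R_{\varphi^{-1}(J)})$.
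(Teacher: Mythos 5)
Your proposal is correct, and in fact it contains the paper's proof verbatim as its second (``computational'') variant: the paper assumes $a\,\mathcal R_{\varphi^{-1}(J)}\,a'$, picks $x\in\varphi^{-1}(J)$ with $a+x=a'+x$, applies $\varphi$ to get $\varphi(a)+\varphi(x)=\varphi(a')+\varphi(x)$ with $\varphi(x)\in J$, and concludes $a\,\tilde{\varphi}(\mathcal R_{J})\,a'$ --- exactly your element-level argument. Your lead argument, however, is genuinely different and arguably cleaner: it uses only the universal property of $\mathcal R_{\varphi^{-1}(J)}$ as the smallest congruence whose zero-class contains $\varphi^{-1}(J)$, together with the identity $I(\tilde{\varphi}(\mathcal R))=\varphi^{-1}(I(\mathcal R))$ that the paper records at the start of \S 4, and the monotonicity $\varphi^{-1}(J)\subseteq\varphi^{-1}(\overline{J})$. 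This abstract route avoids invoking the explicit description of $\mathcal R_{J}$ via $a+x=a'+x$ (a fact imported from the earlier paper [10]), at the cost of relying on the displayed identity instead; the paper's route is more self-contained at the level of elements but proves nothing more. Both are complete proofs, and your closing observation that equality can fail (since $\varphi^{-1}(\overline{J})$ may strictly contain $\overline{\varphi^{-1}(J)}$) is a sensible caution consistent with the paper stating only an inequality.
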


\begin{theorem}
Let $A$ and $C$ denote two $B_{1}$--algebras, and $\varphi : A \rightarrow C$ a morphism.
Then $\tilde{\varphi}:Spec(C)\rightarrow Spec(A)$ maps $MaxSpec(C)$ into $MaxSpec(A)$,
and the diagram
$$
\begin{array}[c]{ccc}
Pr_{s}(C)&\stackrel{\varphi^{-1}}{\rightarrow}&Pr_{s}(A)\\
\downarrow\scriptstyle{\alpha_{C}}&&\downarrow\scriptstyle{\alpha_{A}}\\
MaxSpec(C)&\stackrel{\tilde{\varphi}}{\rightarrow}&MaxSpec(A)
\end{array}
$$
commutes.
\end{theorem}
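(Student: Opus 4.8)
The plan is to reduce the whole theorem to a single identity between congruences on $A$, namely
$$
\tilde{\varphi}(\mathcal S_{\mathcal P}) = \mathcal S_{\varphi^{-1}(\mathcal P)}
\qquad (\mathcal P \in Pr_{s}(C)),
$$
from which both assertions will then follow formally. Before exploiting it, I would first check that the top arrow $\varphi^{-1} : Pr_{s}(C) \rightarrow Pr_{s}(A)$ is well defined. For $\mathcal P \in Pr_{s}(C)$ the set $\varphi^{-1}(\mathcal P)$ is a prime ideal of $A$ by the usual argument: if $ab \in \varphi^{-1}(\mathcal P)$ then $\varphi(a)\varphi(b) = \varphi(ab) \in \mathcal P$, so one of $\varphi(a), \varphi(b)$ lies in $\mathcal P$, i.e. one of $a, b$ lies in $\varphi^{-1}(\mathcal P)$; and since $\varphi(1) = 1 \notin \mathcal P$ (a prime ideal being proper) we have $1 \notin \varphi^{-1}(\mathcal P)$, so $\varphi^{-1}(\mathcal P) \neq A$. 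That it is \emph{saturated} is where a little care is needed: using that $\mathcal P$ is saturated, $\mathcal P = I(\mathcal S_{\mathcal P})$, and then the identity $I(\tilde{\varphi}(\mathcal R)) = \varphi^{-1}(I(\mathcal R))$ already established in \S 4 gives $\varphi^{-1}(\mathcal P) = I(\tilde{\varphi}(\mathcal S_{\mathcal P}))$, which is of the form $I(\cdot)$ and hence saturated.

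Next I would prove the key identity by directly unfolding both sides. For $(a,a') \in A^{2}$, the relation $a\, \tilde{\varphi}(\mathcal S_{\mathcal P})\, a'$ means, by definition of $\tilde{\varphi}$, that $\varphi(a)\, \mathcal S_{\mathcal P}\, \varphi(a')$, that is, either both $\varphi(a), \varphi(a') \in \mathcal P$ or both $\varphi(a), \varphi(a') \notin \mathcal P$. On the other hand, $a\, \mathcal S_{\varphi^{-1}(\mathcal P)}\, a'$ means either both $a, a' \in \varphi^{-1}(\mathcal P)$ or both $a, a' \notin \varphi^{-1}(\mathcal P)$, i.e. either both $\varphi(a), \varphi(a') \in \mathcal P$ or both $\varphi(a), \varphi(a') \notin \mathcal P$. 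These two conditions are verbatim the same, so the identity holds.

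Finally I would harvest both conclusions of the theorem. Commutativity of the square is exactly the assertion $\tilde{\varphi}(\alpha_{C}(\mathcal P)) = \alpha_{A}(\varphi^{-1}(\mathcal P))$ for $\mathcal P \in Pr_{s}(C)$, which upon substituting $\alpha_{C}(\mathcal P) = \mathcal S_{\mathcal P}$ and $\alpha_{A}(\varphi^{-1}(\mathcal P)) = \mathcal S_{\varphi^{-1}(\mathcal P)}$ is precisely the identity just proved. For the ``maps into'' claim, given $\mathcal R \in MaxSpec(C)$, Theorem 2.1 gives $\mathcal R = \mathcal S_{I(\mathcal R)}$ with $I(\mathcal R) \in Pr_{s}(C)$; hence $\tilde{\varphi}(\mathcal R) = \tilde{\varphi}(\mathcal S_{I(\mathcal R)}) = \mathcal S_{\varphi^{-1}(I(\mathcal R))}$, which belongs to $MaxSpec(A)$ because $\varphi^{-1}(I(\mathcal R)) \in Pr_{s}(A)$ and $\alpha_{A}$ takes values in $MaxSpec(A)$.

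I expect no serious obstacle: the entire content is the observation that the congruence $\mathcal S_{(\cdot)}$ and the pullback $\tilde{\varphi}$ interact in the simplest possible way, and once the identity is isolated the rest is purely formal. The one point that genuinely requires the saturation machinery, rather than a naive set-theoretic argument, is the well-definedness of the top arrow, that is, the fact that $\varphi^{-1}(\mathcal P)$ remains saturated.
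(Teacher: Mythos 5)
Your proposal is correct and follows essentially the same route as the paper: both hinge on the single identity $\tilde{\varphi}(\mathcal S_{\mathcal P})=\mathcal S_{\varphi^{-1}(\mathcal P)}$, proved by the same verbatim unfolding of the two definitions, with the commutativity of the square read off directly and the statement about $MaxSpec$ deduced from $MaxSpec(C)=\alpha_{C}(Pr_{s}(C))$ and $MaxSpec(A)=\alpha_{A}(Pr_{s}(A))$. The only difference is that you spell out the well--definedness of $\varphi^{-1}:Pr_{s}(C)\rightarrow Pr_{s}(A)$ (primality plus saturation via $\varphi^{-1}(\mathcal P)=I(\tilde{\varphi}(\mathcal S_{\mathcal P}))$), a point the paper leaves implicit after its \S 4 discussion.
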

\begin{proof}Let $\mathcal P\in Pr_{s}(C)$, then, for all $(a,a^{'})\in A^{2}$
\begin{eqnarray}
a \tilde{\varphi}(\mathcal S_{\mathcal P}) a^{'}
&\Longleftrightarrow&\varphi(a)\mathcal S_{\mathcal P}\varphi(a^{'}) \nonumber \\
&\Longleftrightarrow&(\varphi(a)\in \mathcal P \,\, \text{and} \,\,
\varphi(a^{'})\in \mathcal P)) \,\, \nonumber \\
\text{or} \,\, (\varphi(a)\notin \mathcal P \,\, \text{and} \,\,
\varphi(a^{'})\notin \mathcal P) \nonumber \\
&\Longleftrightarrow&(a\in \varphi^{-1}(\mathcal P) \,\, \text{and} \,\,
a^{'}\in \varphi^{-1}(\mathcal P)) \,\, \nonumber \\
\text{or} \,\,  (a\notin \varphi^{-1}(\mathcal P) \,\, \text{and} \,\,
a^{'}\notin \varphi^{-1}(\mathcal P))\nonumber \\
&\Longleftrightarrow&a\mathcal S_{\varphi^{-1}(\mathcal P)}a^{'} \,\, . \nonumber
\end{eqnarray}
Therefore 
\begin{eqnarray}
(\tilde{\varphi}\circ \alpha_{C})(\mathcal P)
&=&\tilde{\varphi}(\alpha_{C}(\mathcal P)) \nonumber \\
&=&\tilde{\varphi}(\mathcal S_{\mathcal P}) \nonumber \\
&=&\mathcal S_{\varphi^{-1}(\mathcal P)} \nonumber \\
&=&\alpha_{A}(\varphi^{-1}(\mathcal P)) \nonumber \\
&=&(\alpha_{A}\circ \varphi^{-1})(\mathcal P) \nonumber
\end{eqnarray}
whence $\tilde{\varphi}\circ \alpha_{C}=\alpha_{A}\circ \varphi^{-1} \,\, .$

Incidentally we have proved that $\tilde{\varphi}$ maps $MaxSpec(C)=\alpha_{C}(Pr_{s}(C))$
into $\alpha_{A}(Pr_{s}(A))=MaxSpec(A)$, \it i.e. \rm the first assertion.
\end{proof}
\newpage

\section{Nilpotent radicals and prime ideals}
\label{sec:5}

The usual theory generalizes without major problem to $B_{1}$--algebras.

\begin{theorem}In the $B_{1}$--algebra $A$,let us define
$$
Nil(A):=\{x\in A \vert (\exists n\geq 1) x^{n}=0\}\,\, .
$$
Then $Nil(A)$ is a saturated ideal of $A$, and one has
$$
\bigcap_{\mathcal P\in Pr(A)}\mathcal P=\bigcap_{\mathcal P\in Pr_{s}(A)}\mathcal P=Nil(A)\,\, .
$$
\end{theorem}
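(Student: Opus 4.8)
The plan is to establish three things in sequence: that $Nil(A)$ is a saturated ideal, that $\bigcap_{\mathcal P\in Pr(A)}\mathcal P \subseteq Nil(A)$, and that $Nil(A)\subseteq \bigcap_{\mathcal P\in Pr(A)}\mathcal P$. The chain of equalities in the statement then follows by observing that $Pr_s(A)\subseteq Pr(A)$ forces $\bigcap_{\mathcal P\in Pr(A)}\mathcal P \subseteq \bigcap_{\mathcal P\in Pr_s(A)}\mathcal P$, so that once we prove both containments between $Nil(A)$ and the intersection over all prime ideals, and also show $Nil(A)$ is itself saturated (hence equal to an intersection of saturated primes via Theorem 2.3), the two intersections will be squeezed together.

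First I would check that $Nil(A)$ is an ideal. It contains $0$ since $0^1=0$. For the multiplicative absorption, if $x^n=0$ and $a\in A$, then $(ax)^n=a^n x^n=0$ by commutativity, so $ax\in Nil(A)$. The one genuinely $B_1$-flavoured point is stability under addition: if $x^m=0$ and $y^n=0$, I would expand $(x+y)^{m+n}$ and use idempotency of addition ($1+1=1$) to control the resulting finite sum of monomials. Each monomial $x^i y^j$ with $i+j=m+n$ has either $i\ge m$ or $j\ge n$, so every term vanishes and the whole sum collapses to $0$; here the absence of a binomial coefficient issue is actually a simplification, since in characteristic one we merely need each monomial to be zero rather than managing coefficients. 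This gives $x+y\in Nil(A)$.

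Next, for the inclusion $\bigcap_{\mathcal P\in Pr(A)}\mathcal P \subseteq Nil(A)$, I would argue by contraposition: if $x\notin Nil(A)$, I want to produce a prime ideal avoiding $x$. The classical route is to consider the multiplicative set $\{1,x,x^2,\dots\}$, which does not contain $0$ precisely because $x$ is not nilpotent, and then use Zorn's Lemma to find an ideal maximal among those disjoint from this set, showing such a maximal ideal is prime. I expect this to be the main obstacle, since the standard ring-theoretic primality argument uses additive inverses that are unavailable here; the $B_1$ version of ``an ideal maximal with respect to missing a multiplicative set is prime'' needs to be verified carefully, checking that if $ab$ lies in the ideal but neither $a$ nor $b$ does, the ideals generated by adjoining $a$ or $b$ must meet the multiplicative set, leading to a contradiction through the explicit description of generated ideals $<S>$ recalled in Section 2.

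For the reverse inclusion $Nil(A)\subseteq \bigcap_{\mathcal P\in Pr(A)}\mathcal P$, I would take $x\in Nil(A)$ with $x^n=0$ and any prime $\mathcal P$; since $0\in\mathcal P$ we have $x^n\in\mathcal P$, and primality applied inductively (writing $x^n = x\cdot x^{n-1}$) forces $x\in\mathcal P$. This is routine. Finally, to link the two intersections, I would note that since $Nil(A)$ equals the intersection over all primes and is saturated, and since every saturated prime is a prime, the intersection over saturated primes sits between these; combined with Theorem 2.3 guaranteeing enough saturated primes above any proper saturated ideal, the three quantities coincide, completing the proof.
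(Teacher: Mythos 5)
Your verification that $Nil(A)$ is an ideal, your reverse inclusion $Nil(A)\subseteq\bigcap_{\mathcal P\in Pr(A)}\mathcal P$, and your Zorn's Lemma construction of a prime ideal disjoint from $\{1,x,x^2,\dots\}$ are all fine; indeed the primality check for an ideal maximal among \emph{all} ideals missing that multiplicative set goes through by the purely classical computation $x^{m+n}=(p_1+\lambda a)(p_2+\mu b)\in\mathcal P$, with no interference from the absence of subtraction. So you do obtain $\bigcap_{\mathcal P\in Pr(A)}\mathcal P=Nil(A)$.

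The genuine gap is the equality with $\bigcap_{\mathcal P\in Pr_{s}(A)}\mathcal P$. Since $Pr_s(A)\subseteq Pr(A)$, the intersection over saturated primes is a priori \emph{larger}, so the hard direction is $\bigcap_{\mathcal P\in Pr_{s}(A)}\mathcal P\subseteq Nil(A)$: for each non-nilpotent $x$ you must produce a \emph{saturated} prime avoiding $x$. The prime your Zorn argument yields need not be saturated (non-saturated primes genuinely occur, e.g.\ $\alpha A$ in case (iii) of Section 7), and your proposed patch does not close this: Theorem 3.3 only guarantees that a proper saturated ideal is contained in \emph{some} saturated prime, which says nothing about avoiding a prescribed element $x$; and the identity ``a saturated ideal equals the intersection of the saturated primes above it'' is Proposition 5.5, whose proof in the paper depends on the present theorem, so invoking it here would be circular. (You also never actually prove that $Nil(A)$ is saturated, which your final step presupposes.) The paper's proof handles exactly this point by running Zorn over the poset of \emph{saturated} ideals containing no power of $x$ and then showing the maximal element is prime; that step is where the real work lies, because $\overline{\mathcal P+Aa}$ meeting $\{x^n\}$ only gives $x^m+u=u$ with $u=p_1+\lambda a$, and one must exploit saturation (via $x^m b+ub=ub\Rightarrow x^m b\in\mathcal P$ and then $x^{m+n}+x^m v=x^m v$) to reach the contradiction $x^{m+n}\in\mathcal P$. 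You correctly flagged this step as the main obstacle but then routed around it in a way that loses the saturation property you need.
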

\begin{proof}
Let $M:=\bigcap_{\mathcal P\in Pr(A)}\mathcal P$ and $N=\bigcap_{\mathcal P\in Pr_{s}(A)}\mathcal P$. If 
$x\in Nil(A)$ and \linebreak $\mathcal P\in Pr(A)$,
then, for some $n\geq 1$, $x^{n}=0\in \mathcal P$, whence (as $\mathcal P$ is prime) 
$x\in \mathcal P$ : $Nil(A)\subseteq M$.

As $Pr_{s}(A)\subseteq Pr(A)$, we have $M\subseteq N$.

Let now $x\notin Nil(A)$ ; then
$$
(\forall n\in \mathbf N) \,\, x^{n}\neq 0 \,\, .
$$
Define
$$
\mathcal E:=\{J\in Id_{s}(A) \vert (\forall n\geq 0) x^{n}\notin J\}.
$$
This set is nonempty ($\{0\}\in \mathcal E$) and inductive for $\subseteq$,
therefore, by Zorn's Lemma, there exists a maximal element $\mathcal P$ of $\mathcal E$.
As $1=x^{0}\notin \mathcal P$, $\mathcal P \neq A$.

Let us assume $ab\in \mathcal P$, $a\notin \mathcal P$ and $b\notin \mathcal P$ ;
then $\overline{\mathcal P + Aa}$ and $\overline{\mathcal P + Ab}$ are saturated ideals of $A$
strictly containing $\mathcal P$, whence there exists two integers $m$ and $n$ with $x^{m}\in \overline{\mathcal P + Aa}$
and $x^{n}\in \overline{\mathcal P + Ab}$. By definition of the closure of an ideal, there \linebreak are  $u=p_{1}+\lambda a\in \mathcal P + Aa$
and $v=p_{2}+\mu b\in \mathcal P + Ab$ such that $x^{m}+u=u$ and $x^{n}+v=v$.
Then
$$
ub=p_{1}b+\lambda(ab)\in \mathcal P
$$
and
$$
x^{m}b+ub=(x^{m}+u)b=ub\,\, ,
$$
whence, as $\mathcal P$ is saturated, $x^{m}b\in \mathcal P$.

Then
$$
x^{m}v=x^{m}p_{2}+\mu x^{m}b\in \mathcal P \,\, ;
$$

as
\begin{eqnarray}
x^{m+n}+x^{m}v
&=&x^{m}(x^{n}+v) \nonumber \\
&=&x^{m}v \,\, , \nonumber
\end{eqnarray}
we obtain $x^{m+n}\in\mathcal P$, a contradiction.

Therefore $\mathcal P$ is prime and saturated and $x=x^{1}\notin \mathcal P$, whence $x\notin N$. We have
proved that $N\subseteq Nil(A)$, whence $M=N=Nil(A)$.
\end{proof}
\begin{corollary}
$$Nil(A)=\bigcap_{\mathcal P\in Pr(A)}\overline{\mathcal P}\,\, .$$
\end{corollary}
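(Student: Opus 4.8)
The plan is to read off the equality from the two chains already packaged in Theorem 5.1, using nothing more than the elementary facts that $J\subseteq\overline{J}$ for every ideal $J$ and that a saturated ideal coincides with its own closure. No new ideas about nilpotents are needed: Theorem 5.1 does all the real work, and the corollary is a formal rearrangement.

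First I would record the easy inclusion. By Theorem 5.1 one has $Nil(A)=\bigcap_{\mathcal P\in Pr(A)}\mathcal P$. Since $\overline{J}:=I(\mathcal R_{J})\supseteq J$ for every ideal $J$ (because $\mathcal R_{J}$ is by construction the smallest congruence with $J\subseteq I(\mathcal R_{J})$), each prime $\mathcal P$ satisfies $\mathcal P\subseteq\overline{\mathcal P}$. Intersecting termwise over $Pr(A)$ then gives
$$Nil(A)=\bigcap_{\mathcal P\in Pr(A)}\mathcal P\subseteq\bigcap_{\mathcal P\in Pr(A)}\overline{\mathcal P}\,\,.$$

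For the reverse inclusion I would invoke the second equality in Theorem 5.1, namely $Nil(A)=\bigcap_{\mathcal P\in Pr_{s}(A)}\mathcal P$. Because $Pr_{s}(A)\subseteq Pr(A)$, intersecting the family $(\overline{\mathcal P})_{\mathcal P}$ over the \emph{larger} index set $Pr(A)$ yields something contained in the intersection over the smaller set $Pr_{s}(A)$:
$$\bigcap_{\mathcal P\in Pr(A)}\overline{\mathcal P}\subseteq\bigcap_{\mathcal P\in Pr_{s}(A)}\overline{\mathcal P}\,\,.$$
Now every $\mathcal P\in Pr_{s}(A)$ is saturated, so $\overline{\mathcal P}=\mathcal P$ by the definition of a saturated ideal; hence the right-hand side equals $\bigcap_{\mathcal P\in Pr_{s}(A)}\mathcal P=Nil(A)$. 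Combining the two displays gives the claimed equality.

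There is no genuine obstacle: the entire content is carried by Theorem 5.1, and the corollary reduces to the inclusion $Pr_{s}(A)\subseteq Pr(A)$ together with the observations that passing to the closure only enlarges an ideal and fixes a saturated one. The single point to state with care is the direction of the index-set inclusion — enlarging the index set of an intersection \emph{shrinks} the resulting set — which is precisely what allows the closures $\overline{\mathcal P}$ to be reabsorbed into $Nil(A)$.
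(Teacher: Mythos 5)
Your proof is correct and follows exactly the same chain of inclusions as the paper: $Nil(A)=\bigcap_{\mathcal P\in Pr(A)}\mathcal P\subseteq\bigcap_{\mathcal P\in Pr(A)}\overline{\mathcal P}\subseteq\bigcap_{\mathcal P\in Pr_{s}(A)}\overline{\mathcal P}=\bigcap_{\mathcal P\in Pr_{s}(A)}\mathcal P=Nil(A)$, with both ends supplied by Theorem 5.1. Your explicit justifications (closure enlarges an ideal, shrinking the index set enlarges the intersection, saturated ideals are closure-fixed) are exactly the implicit steps of the paper's argument.
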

\begin{proof}
\begin{eqnarray}
Nil(A)
&=&\bigcap_{\mathcal P\in Pr(A)}\mathcal P \,\, \text{(by Theorem 5.1)} \nonumber \\
&\subseteq&\bigcap_{\mathcal P\in Pr(A)}\overline{\mathcal P} \nonumber \\
&\subseteq&\bigcap_{\mathcal P\in Pr_{s}(A)}\overline{\mathcal P} \nonumber \\
&=&\bigcap_{\mathcal P\in Pr_{s}(A)}\mathcal P \nonumber \\
&=& Nil(A) \,\, (\text{also by Theorem 5.1}). \nonumber 
\end{eqnarray}
\end{proof}

\begin{definition}For $I$ an ideal of $A$, we define the \it root \rm $r(I)$
of $I$ by
$$
r(I):=\{x\in A \vert (\exists n\geq 1) x^{n}\in I\}.
$$
\end{definition}
\begin{lemma}
\begin{enumerate}

\item[(i)]$r(I)$ is an ideal of $A$.

\item[(ii)]$\overline{r(I)}\subseteq r(\overline{I})$ ; in particular,
if $I$ is saturated then so is $r(I)$.

\item[(iii)]$r(\{0\})=Nil(A)$.

\end{enumerate}
\end{lemma}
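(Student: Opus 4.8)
The plan is to treat the three parts in the order (iii), (i), (ii): part (iii) is a pure unwinding of definitions — indeed $r(\{0\})=\{x\in A\mid(\exists n\geq 1)\,x^{n}=0\}=Nil(A)$, which is exactly the defining equality from Theorem 5.1 — while part (i) supplies the ideal structure that lets one even speak of $\overline{r(I)}$ in part (ii), and part (ii) is where the characteristic-one hypothesis does the real work.

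For (i), containment of $0$ is clear ($0=0^{1}\in I$), and absorption is immediate from commutativity: if $x^{n}\in I$ and $a\in A$, then $(ax)^{n}=a^{n}x^{n}\in I$ since $I$ is an ideal. The only genuine point is stability under addition, and here I would use that $A$ has characteristic one, so addition is idempotent ($a+a=a$, because $a+a=a(1+1)=a$), which collapses every integer coefficient in the binomial expansion and yields the coefficient-free identity
$$(x+y)^{p}=\sum_{i=0}^{p}x^{i}y^{p-i}.$$
Taking $x^{m}\in I$, $y^{n}\in I$ and $p=m+n-1$, a short pigeonhole argument shows that for each $i$ either $i\geq m$ or $p-i\geq n$, so every monomial $x^{i}y^{p-i}$ is a multiple of $x^{m}$ or of $y^{n}$ and hence lies in $I$ by absorption; stability of $I$ under addition then gives $(x+y)^{p}\in I$, that is $x+y\in r(I)$.

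For (ii), I would first record the explicit description of the closure already implicit in \S 3: since $x\,\mathcal R_{J}\,y$ entails the existence of $z\in J$ with $x+z=y+z$, the choice $y=0$ shows that $x\in\overline{J}=I(\mathcal R_{J})$ forces some $z\in J$ with $x+z=z$. So, starting from $x\in\overline{r(I)}$, I obtain $z\in r(I)$ with $x+z=z$, and then $z^{n}\in I$ for some $n\geq 1$. Raising $x+z=z$ to the $n$-th power and again invoking the coefficient-free expansion, the monomial $x^{n}$ occurs among the summands of $(x+z)^{n}=z^{n}$, so by idempotency $x^{n}+z^{n}=z^{n}$. Since $z^{n}\in I\subseteq I(\mathcal R_{I})$, we have $z^{n}\,\mathcal R_{I}\,0$, and compatibility of the congruence $\mathcal R_{I}$ with addition gives $z^{n}=x^{n}+z^{n}\,\mathcal R_{I}\,x^{n}+0=x^{n}$; combined with $z^{n}\,\mathcal R_{I}\,0$ this yields $x^{n}\,\mathcal R_{I}\,0$, i.e. $x^{n}\in\overline{I}$ and hence $x\in r(\overline{I})$. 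The \lq\lq in particular\rq\rq{} clause then follows formally: every ideal $J$ satisfies $J\subseteq\overline{J}$ (take $z=x$ and use $x+x=x$), so when $I$ is saturated the chain $r(I)\subseteq\overline{r(I)}\subseteq r(\overline{I})=r(I)$ collapses, giving $r(I)=\overline{r(I)}$.

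I expect the main obstacle to be conceptual rather than computational: recognizing that in characteristic one the binomial theorem loses its integer coefficients, so that both the additive closure in (i) and the decisive step $x^{n}+z^{n}=z^{n}$ in (ii) reduce to elementary pigeonhole and idempotency arguments. The only other care needed is to pass from the equality $x^{n}+z^{n}=z^{n}$ back to membership in $\overline{I}$, which I would do directly at the level of the congruence $\mathcal R_{I}$ rather than by quoting any general characterization of the closure.
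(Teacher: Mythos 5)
Your proof is correct. Parts (i) and (iii) are essentially identical to the paper's: the same coefficient-free binomial expansion of $(x+y)^{m+n-1}$ and the same pigeonhole on the exponents. Part (ii), however, takes a genuinely different route. The paper fixes $u\in r(I)$ with $x+u=u$ and $u^{n}\in I$ and then runs an induction on $j$, using the telescoping identity $u^{n-j-1}x^{j+1}+u^{n-j}x^{j}=u^{n-j-1}x^{j}(x+u)=u^{n-j}x^{j}$ to push $u^{n-j}x^{j}$ into $\overline{I}$ one step at a time until $j=n$ yields $x^{n}\in\overline{I}$. You instead raise $x+z=z$ to the $n$-th power in one shot, observe that $x^{n}$ occurs as a summand of the coefficient-free expansion of $(x+z)^{n}=z^{n}$, and conclude $x^{n}+z^{n}=z^{n}$ by idempotency of addition, whence $x^{n}\,\mathcal R_{I}\,0$ and $x^{n}\in\overline{I}$. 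Both arguments are valid; yours is shorter and reuses the binomial identity already needed for (i), at the price of invoking the explicit description of $\mathcal R_{I}$ (the existence of $z\in r(I)$ with $x+z=z$) and then returning to $\overline{I}$ via the congruence, whereas the paper's induction works entirely with the single relation $x+u=u$ and the closure operator, never expanding a power of a sum. Your handling of the \lq\lq in particular\rq\rq{} clause ($r(I)\subseteq\overline{r(I)}\subseteq r(\overline{I})=r(I)$) matches the paper exactly.
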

\begin{proof}
\begin{enumerate}
\item[(i)]Obviously, $0\in r(I)$.

If $x\in r(I)$ and $y\in r(I)$, then $x^{m}\in I$
for some $m\geq 1$ and $y^{n}\in I$ for some $n\geq 1$,
whence
\begin{eqnarray}
(x+y)^{m+n-1}
&=&\sum_{j=0}^{m+n-1}\binom {m+n-1}j x^{j}y^{m+n-1-j} \nonumber \\
(&&=\sum_{j=0}^{m+n-1} x^{j}y^{m+n-1-j}) \nonumber  \\
&&\in I \,\, ,\nonumber
\end{eqnarray}
as $x^{j}\in I$ for $j\geq m$ and $y^{m+n-1-j}\in I$ for $j\leq m-1$
(as, then, $m+n-1-j\geq n$). Thus $x+y\in r(I)$.

For $a\in A$, $(ax)^{m}=a^{m}x^{m}\in I$, whence $ax\in r(I)$.
Therefore $r(I)$ is an ideal of $A$.

\item[(ii)]Let $x\in\overline{r(I)}$ then there is $u\in r(I)$ such that $x+u=u$,
and there is $n\geq 1$ such that $u^{n}\in I$. Let us show by induction on $j\in\{0,...,n\}$ that
$u^{n-j}x^{j}\in\overline{I}$. This is clear for $j=0$. Let then
$j\in\{0,...,n-1\}$, and assume that $u^{n-j}x^{j}\in\overline{I}$ ; then
\begin{eqnarray}
u^{n-j-1}x^{j+1}+u^{n-j}x^{j} 
&=&u^{n-j-1}x^{j}(x+u) \nonumber \\
&=&u^{n-j-1}x^{j}u \nonumber \\
&=&u^{n-j}x^{j} \,\, , \nonumber 
\end{eqnarray}
whence $u^{n-j-1}x^{j+1}\in \overline{\overline{I}}=\overline{I} \,\, .$
Thus, for $j=n$, we obtain 
$$
x^{n}=u^{n-n}x^{n}\in \overline{I}\,\, ,
$$ 
whence $x\in r(\overline{I})$.

If now $I$ is saturated, then
\begin{eqnarray}
r(I)
&\subseteq& \overline{r(I)} \nonumber \\
&\subseteq& r(\overline{I}) \,\, (\text{by the above}) \nonumber \\
&=&r(I) \,\, , \nonumber
\end{eqnarray}
whence $r(I)=\overline{r(I)}$ is saturated.
\item[(iii)] That assertion is obvious.
\end{enumerate}
\end{proof}

\begin{proposition}For each saturated ideal $I$ of the $B_{1}$--algebra $A$\,\, , one has
$$
r(I)=\bigcap_{\mathcal P\in Pr_{s}(A) ; I\subseteq \mathcal P}\mathcal P\,\, .
$$
\end{proposition}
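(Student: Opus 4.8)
The plan is to establish the two inclusions separately, the inclusion $\subseteq$ being immediate and the reverse one requiring a Zorn's Lemma argument that closely follows the proof of Theorem 5.1. For $\subseteq$: if $x\in r(I)$ then $x^{n}\in I$ for some $n\geq 1$, so whenever $\mathcal P\in Pr_{s}(A)$ satisfies $I\subseteq\mathcal P$ one has $x^{n}\in\mathcal P$, and primality of $\mathcal P$ forces $x\in\mathcal P$ (by induction on $n$). Hence $r(I)$ lies in every saturated prime containing $I$, and so in their intersection. If $I=A$ there are no such primes and both sides equal $A$, so I assume henceforth that $I$ is proper (which is automatic once $x\notin r(I)$, since $I\subseteq r(I)$).

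For the reverse inclusion I argue by contraposition: given $x\notin r(I)$, I produce a saturated prime $\mathcal P$ with $I\subseteq\mathcal P$ and $x\notin\mathcal P$. Mirroring Theorem 5.1, I consider
$$\mathcal E:=\{J\in Id_{s}(A)\mid I\subseteq J \text{ and } (\forall n\geq 0)\ x^{n}\notin J\}.$$
Because $I$ is saturated, proper, and satisfies $x^{n}\notin I$ for all $n\geq 1$ (as $x\notin r(I)$), we have $I\in\mathcal E$, so $\mathcal E$ is nonempty. The set $\mathcal E$ is inductive for $\subseteq$: the union of a chain of saturated ideals is again saturated, since a saturated ideal is exactly a down-closed ideal (indeed $\overline J=\{y\mid\exists z\in J,\ y+z=z\}$ is the down-closure of $J$), and down-closedness, containment of $I$, and avoidance of the powers $x^{n}$ all pass to unions of chains. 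Zorn's Lemma then furnishes a maximal element $\mathcal P$ of $\mathcal E$; since $1=x^{0}\notin\mathcal P$ we have $\mathcal P\neq A$, while $I\subseteq\mathcal P$ and $x=x^{1}\notin\mathcal P$ hold by construction.

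The one genuinely delicate step — the place I expect to be the main obstacle — is verifying that this maximal $\mathcal P$ is prime; here the computation is verbatim that of Theorem 5.1. Assuming $ab\in\mathcal P$ with $a\notin\mathcal P$ and $b\notin\mathcal P$, the saturated ideals $\overline{\mathcal P+Aa}$ and $\overline{\mathcal P+Ab}$ strictly contain $\mathcal P$ and still contain $I$, so by maximality of $\mathcal P$ in $\mathcal E$ each must absorb a power of $x$, say $x^{m}\in\overline{\mathcal P+Aa}$ and $x^{n}\in\overline{\mathcal P+Ab}$. Writing these powers below suitable elements $u=p_{1}+\lambda a$ and $v=p_{2}+\mu b$ and running the same manipulation with closures and saturation as in Theorem 5.1 yields $x^{m}b\in\mathcal P$, then $x^{m}v\in\mathcal P$, and finally $x^{m+n}\in\mathcal P$, contradicting $\mathcal P\in\mathcal E$. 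Thus $\mathcal P$ is a saturated prime containing $I$ but not $x$, so $x$ lies outside the right-hand intersection; this gives $\bigcap_{\mathcal P\in Pr_{s}(A),\,I\subseteq\mathcal P}\mathcal P\subseteq r(I)$ and finishes the proof. The condition $I\subseteq J$ built into $\mathcal E$ never interferes with this computation, since every ideal appearing there already contains $\mathcal P\supseteq I$.

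Finally, I would remark on the conceptual reason this works, and on an alternative route. Passing to the quotient $A/\mathcal R_{I}$ (legitimate because $I$ saturated means $\overline I=I=I(\mathcal R_{I})$), the condition $x^{n}\in I$ becomes $\bar x^{n}=\bar 0$, so $r(I)$ is the preimage of $Nil(A/\mathcal R_{I})$, and the statement reduces to $Nil(A/\mathcal R_{I})=\bigcap_{\mathcal Q\in Pr_{s}(A/\mathcal R_{I})}\mathcal Q$, which is Theorem 5.1 together with Lemma 5.4(iii). The cost of this reformulation is setting up the order-isomorphism between $Pr_{s}(A/\mathcal R_{I})$ and $\{\mathcal P\in Pr_{s}(A)\mid I\subseteq\mathcal P\}$ and checking it commutes with intersections; because that correspondence is not made fully explicit in the material above, the direct Zorn argument is the safer path to write out in detail.
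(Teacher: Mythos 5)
Your proof is correct, but it takes a genuinely different route from the paper's. For the nontrivial inclusion the paper does exactly what you sketch in your final remark and then decline to pursue: given $y\notin r(I)$ it passes to the quotient $A_{0}:=A/\mathcal R_{I}$, observes that saturation of $I$ forces $\pi(y)^{n}\neq\bar 0$ for all $n\geq 1$, so $\pi(y)\notin Nil(A_{0})$, invokes Theorem 5.1 in $A_{0}$ to obtain a saturated prime $\mathcal P_{0}$ avoiding $\pi(y)$, and pulls it back to $\mathcal P:=\pi^{-1}(\mathcal P_{0})$, asserting in one line that this is a saturated prime of $A$ containing $I$. You instead rerun the Zorn's Lemma construction of Theorem 5.1 relativized to $I$, adding the constraint $I\subseteq J$ to the inductive family $\mathcal E$ and repeating the primality computation for a maximal element. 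The trade-off is clear: the paper's argument is shorter and reuses Theorem 5.1 as a black box, but it silently relies on the correspondence between saturated primes of $A/\mathcal R_{I}$ and saturated primes of $A$ containing $I$ (a fact only really worked out later, in Theorem 6.4); your argument is self-contained modulo the identification of saturated ideals with down-closed ideals (which underlies the paper's repeated use of ``$x+u=u$ for some $u\in J$'' as the meaning of $x\in\overline J$, so it is a legitimate assumption here) and the observation that this property, unlike saturation defined abstractly via congruences, visibly passes to unions of chains. Your checks that $I\in\mathcal E$ (hence $I$ proper), that the enlarged ideals $\overline{\mathcal P+Aa}$ and $\overline{\mathcal P+Ab}$ still contain $I$ and hence can only fail membership in $\mathcal E$ by absorbing a power of $x$, and the easy inclusion $r(I)\subseteq\mathcal P$ are all sound, so the proof stands as written.
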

\begin{remark}For $I=\{0\}$, this is part of Theorem 5.1.
\end{remark}
\begin{proof}Let $x\in r(I)$, and let $\mathcal P\in Pr_{s}(A)$ with $I\subseteq \mathcal P$ ; then, for some $n\geq 1$
$x^{n}\in I$, whence $x^{n}\in \mathcal P$ and $x\in \mathcal P$ :
$$
r(I)\subseteq \bigcap_{\mathcal P\in Pr_{s}(A) ; I\subseteq \mathcal P}\mathcal P \,\, .
$$

Let now $y\in A$, $y\notin r(I)$, and denote by $\pi$
the canonical projection
$$
\pi : A \twoheadrightarrow A_{0}:=\displaystyle\frac{A}{\mathcal R_{I}} \,\, .
$$
As $I$ is saturated, one has
$$
\forall n\geq 1 \,\, y^{n}\notin{\overline{I}} \,\, ,
$$
whence
$$
\forall n\geq 1 y^{n}\not{\mathcal R_{I}} 0 \,\, ,
$$
or
$$
\forall n\geq 1 \,\, \pi(y)^{n}=\pi(y^{n}) \neq \overline{0} \,\, .
$$
Therefore $\pi(y)\notin Nil(A_{0})$, whence, according to Theorem 5.1,
there exists a saturated prime ideal $\mathcal P_{0}$ of $A_{0}$ such that
$\pi(y)\notin \mathcal P_{0}$. But then $\mathcal P:=\pi^{-1}(\mathcal P_{0})$
is a saturated prime ideal of $A$ containing $I$ with $y\notin \mathcal P$, whence
$$
y\notin \bigcap_{\mathcal P\in Pr_{s}(A) ; I\subseteq \mathcal P}\mathcal P\,\, .
$$
\end{proof}
\newpage

\section{Topology of spectra}
\label{sec:6}

We can now establish the basic topological properties of the spectra $Pr_{s}(A)$
(analogous, in our setting, to Corollary 1.1.8 and Proposition 1.1.10(ii) 
of \cite{6}).

\begin{theorem}$Pr_{s}(A)$ and $MaxSpec(A)$ are $T_{0}$ and
quasi--compact.
\end{theorem}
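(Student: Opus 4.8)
My plan is to exploit the homeomorphism $\alpha_A:Pr_s(A)\to MaxSpec(A)$ of Theorem 2.1 and argue entirely on $Pr_s(A)$, whose closed sets are the traces $W(S)\cap Pr_s(A)$ with $S\subseteq A$. For $f\in A$ I would set $D(f):=\{\mathcal P\in Pr_s(A)\mid f\notin\mathcal P\}$, an open set (the complement of $W(\{f\})\cap Pr_s(A)$); the complement of a general closed set $W(S)\cap Pr_s(A)$ is then $\bigcup_{s\in S}D(s)$, so the $D(f)$ form a subbasis. Since a prime ideal satisfies $fg\in\mathcal P\iff f\in\mathcal P\text{ or }g\in\mathcal P$, one gets $D(f)\cap D(g)=D(fg)$; hence the family $(D(f))_{f\in A}$ is already stable under finite intersection, and is in fact a basis of the topology.

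For $T_0$ the argument is immediate: given distinct $\mathcal P,\mathcal Q\in Pr_s(A)$, they differ as subsets of $A$, so some $x$ lies in exactly one of them, say $x\in\mathcal P$ and $x\notin\mathcal Q$. Then $D(x)$ is an open set containing $\mathcal Q$ but not $\mathcal P$, which is precisely the separation required.

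For quasi--compactness, since the $D(f)$ form a basis it suffices to extract a finite subcover from any cover by basic opens. So I would suppose $Pr_s(A)=\bigcup_{s\in S}D(s)$, which says that no saturated prime ideal contains $S$, \emph{i.e.} $W(S)\cap Pr_s(A)=\emptyset$, and aim to produce a finite $S_0\subseteq S$ with the same property. First I invoke Theorem 3.3 in contrapositive form: if $\overline{<S>}$ were proper it would be contained in some saturated prime ideal, which would then contain $S$; hence $\overline{<S>}=A$, so $1\in\overline{<S>}$. Unwinding the closure, $1\,\mathcal R_{<S>}\,0$ means there is $z\in <S>$ with $1+z=z$, and since every element of $<S>$ is a \emph{finite} sum $z=\sum_{j=1}^{n}a_js_j$, only finitely many $s_1,\dots,s_n\in S$ occur. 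Setting $S_0:=\{s_1,\dots,s_n\}$ yields $z\in <S_0>$ and $1\in\overline{<S_0>}$, so $\overline{<S_0>}=A$; consequently no saturated prime contains $S_0$, and $(D(s))_{s\in S_0}$ is the sought finite subcover.

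The main obstacle is this last step, namely passing from the qualitative statement ``no saturated prime contains $S$'' to a \emph{finite} witness. I expect it to be resolved exactly by combining Theorem 3.3 (existence of a saturated prime over any proper saturated ideal) with the explicit finite--sum description of $<S>$ recalled in Section 2, the finiteness of that sum supplying the finite subfamily. Everything else ($T_0$, the basis property, and the reduction to basic covers) is routine once $D(f)\cap D(g)=D(fg)$ is in hand.
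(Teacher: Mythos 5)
Your proof is correct and follows essentially the same route as the paper: the $T_0$ separation via a basic open missing one of the two points, and quasi--compactness by reducing to a cover indexed by elements of $S$, invoking Theorem 3.3 to force $\overline{\langle S\rangle}=A$, and extracting the finite subfamily from the finite-sum witness of $1\in\overline{\langle S\rangle}$. The only cosmetic difference is that you first reduce to covers by the basic opens $D(f)$, while the paper works directly with the sets $S_i$ attached to an arbitrary open cover; the substance is identical.
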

\begin{proof}According to Theorem 3.1, $Pr_{s}(A)$ and $MaxSpec(A)$ are homeomorphic, therefore it is enough to
establish the result for $Pr_{s}(A)$.

Let $\mathcal P$ and $\mathcal Q$ denote two different points of $Pr_{s}(A)$ ;
then either $\mathcal P\nsubseteq \mathcal Q$ or $\mathcal Q\nsubseteq \mathcal P$.
Let us for instance assume that $\mathcal P\nsubseteq \mathcal Q$ ; then 
$\mathcal Q\notin W(\mathcal P)$ ; set
$$
O:=Pr_{s}(A)\cap (Pr(A)\setminus W(\mathcal P))\,\, .
$$
Then $O$ is an open set in $Pr_{s}(A)$, $\mathcal Q\in O$ and, obviously,
$\mathcal P\notin O$. Therefore $Pr_{s}(A)$ is $T_{0}$.

Let $(U_{i})_{i\in I}$ denote an open cover of $Pr_{s}(A)$ :
$$
Pr_{s}(A)=\bigcup_{i\in I}U_{i}\,\, ;
$$
each $Pr_{s}(A)\setminus U_{i}$ is closed, whence $Pr_{s}(A)\setminus U_{i}=Pr_{s}(A)\cap W(S_{i})$
for some subset $S_{i}$ of $A$. Therefore $Pr_{s}(A)\cap (\bigcap_{i\in I}W(S_{i}))=\emptyset$,
 \it i.e. \rm $Pr_{s}(A)\cap W(\bigcup_{i\in I}S_{i})=\emptyset$. Therefore $Pr_{s}(A)\cap W(\overline{<\bigcup_{i\in I}S_{i}>})=\emptyset$, whence, according to 
Theorem 3.3, $\overline{<\bigcup_{i\in I}S_{i}>}=A$. Let $J=<\bigcup_{i\in I}S_{i}>$ ; then $1\in \overline{J}$, 
hence there is $x\in J$ such that $1+x=x$.
Furthermore, there exist $n\in \mathbf N$, $(i_{1},...,i_{n})\in I^{n}$ , $x_{i_{k}}\in S_{i_{k}}$ and
$(a_{1},...,a_{n})\in A^{n}$ such that $x=a_{1}x_{i_{1}}+...+a_{n}x_{i_{n}}$. But then
$$
1+a_{1}x_{i_{1}}+...+a_{n}x_{i_{n}}=a_{1}x_{i_{1}}+...+a_{n}x_{i_{n}}
$$
whence
$$
1\in \overline{<\{x_{i_{1}},...,x_{i_{n}}\}>}\subseteq \overline{\bigcup_{j=1}^{n}S_{i_{j}}}
$$
and
$$
\overline{\bigcup_{j=1}^{n}S_{i_{j}}}=A\,\, .
$$
It follows that
$$
Pr_{s}(A)\cap W(\bigcup_{j=1}^{n}S_{i_{j}})=\emptyset \,\, ,
$$
that is
$$
Pr_{s}(A)\cap \bigcap_{j=1}^{n}W(S_{i_{j}})=\emptyset \,\, ,
$$
or
$$
Pr_{s}(A)=\bigcup_{j=1}^{n}U_{i_{j}} \,\, :
$$
$Pr_{s}(A)$ is quasi--compact.
\end{proof}

For $f\in A$, let
\begin{eqnarray}
 D(f)
&:=&Pr_{s}(A)\setminus(Pr_{s}(A)\cap W(\{f\})) \nonumber \\
&=&\{\mathcal P \in Pr_{s}(A) \vert f\notin \mathcal P \}. \nonumber
\end{eqnarray}

\begin{proposition}
\begin{enumerate} 
\item Each $D(f)$($f\in A$) is open and quasi--compact in $Pr_{s}(A)$
(see \cite{6}, Proposition 1.1.10 (ii)).
\item The family $(D(f))_{f\in A}$ is an open basis for $Pr_{s}(A)$ 
(see \cite{6}, Proposition 1.1.10(i));
 in particular, the open quasi--compact sets constitute an open basis.
 \item A subset $O$ of $Pr_{s}(A)$ is open and quasi--compact 
 if and only if it is of the form $Pr_{s}(A)\cap W(I)$
 for $I$ an ideal of finite type in $A$.
 \item The family of open quasi--compact subsets of $Pr_{s}(A)$
 is stable under finite intersections.
 \item Each irreducible closed set in $Pr_{s}(A)$ has a unique generic point
 (see \cite{6}, Corollary 1.1.14(ii)).
\end{enumerate}
\end{proposition}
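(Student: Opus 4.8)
The plan is to dispatch the formal parts (1)--(4) quickly and reserve the real work for the sobriety statement (5).

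First I would record three elementary facts. Since $Pr_{s}(A)\cap W(\{f\})$ is by definition closed in $Pr_{s}(A)$, each $D(f)$ is open, giving half of (1). For the basis assertion in (2), an arbitrary open set of $Pr_{s}(A)$ has the form $Pr_{s}(A)\setminus(Pr_{s}(A)\cap W(S))$, and a saturated prime $\mathcal P$ fails to contain $S$ exactly when $f\notin\mathcal P$ for some $f\in S$; hence
$$
Pr_{s}(A)\setminus(Pr_{s}(A)\cap W(S))=\bigcup_{f\in S}D(f),
$$
so the $D(f)$ form a basis. Finally, for $\mathcal P\in Pr_{s}(A)$ one has $fg\in\mathcal P$ iff $f\in\mathcal P$ or $g\in\mathcal P$ (primeness one way, the ideal property the other), so $D(f)\cap D(g)=D(fg)$, which is the key to (4).

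Next I would prove that each $D(f)$ is quasi-compact, the heart of (1), from which the ``in particular'' of (2), together with (3) and (4), follow formally. Refining a cover of $D(f)$ to basic opens $D(g_{i})$, $i\in I$, and passing to complements in $Pr_{s}(A)$, the inclusion $D(f)\subseteq\bigcup_{i}D(g_{i})$ says that every saturated prime containing $S:=\{g_{i}\mid i\in I\}$ contains $f$, i.e.
$$
f\in\bigcap_{\mathcal P\in Pr_{s}(A),\,S\subseteq\mathcal P}\mathcal P=r(\overline{<S>}),
$$
the last equality being Proposition 5.5 applied to the saturated ideal $\overline{<S>}$ (using that a saturated prime contains $S$ iff it contains $\overline{<S>}$). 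Thus $f^{n}\in\overline{<S>}$ for some $n\ge 1$, so there is $u\in<S>$ with $f^{n}+u=u$; writing $u$ as a finite sum exhibits finitely many indices $i_{1},\dots,i_{m}$ with $f^{n}\in\overline{<g_{i_{1}},\dots,g_{i_{m}}>}$, whence $f\in r(\overline{<g_{i_{1}},\dots,g_{i_{m}}>})$ and, again by Proposition 5.5, $D(f)\subseteq D(g_{i_{1}})\cup\dots\cup D(g_{i_{m}})$. Part (3) is then immediate, since an open quasi-compact set is a finite union $D(f_{1})\cup\dots\cup D(f_{n})=Pr_{s}(A)\setminus(Pr_{s}(A)\cap W(<f_{1},\dots,f_{n}>))$ with $<f_{1},\dots,f_{n}>$ of finite type, and conversely; and (4) follows by distributing intersections over such finite unions via $D(f)\cap D(g)=D(fg)$.

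The substantial point is (5). I would first compute the closure of a point: the smallest closed set containing $\mathcal P_{0}$ is $Pr_{s}(A)\cap W(\mathcal P_{0})$, so $\overline{\{\mathcal P_{0}\}}=\{\mathcal Q\in Pr_{s}(A)\mid\mathcal P_{0}\subseteq\mathcal Q\}$. Hence a generic point of a closed $F$ is a least element $\mathcal P_{0}\in F$ whose up-set among saturated primes is all of $F$; this at once gives uniqueness (two such are mutually contained, or invoke that $Pr_{s}(A)$ is $T_{0}$ by Theorem 6.1). For existence, let $F$ be irreducible closed and set $I:=\bigcap_{\mathcal P\in F}\mathcal P$. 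An intersection of saturated ideals is saturated (the closure $\overline{\phantom{x}}$ is monotone), so $I$ is saturated; writing $F=Pr_{s}(A)\cap W(S)$ and using that a saturated prime is its own root, one checks that for saturated primes the conditions $S\subseteq\mathcal Q$ and $I\subseteq\mathcal Q$ coincide, so $F=Pr_{s}(A)\cap W(I)$. It then remains only to see that $I$ is prime, and this prime-from-irreducible step is the main obstacle: if $ab\in I$ but $a,b\notin I$, then $D(a)\cap F$ and $D(b)\cap F$ are nonempty relatively open subsets of $F$, which irreducibility forces to meet, and their intersection is $D(ab)\cap F$; this yields $\mathcal P\in F$ with $ab\notin\mathcal P$, contradicting $ab\in I\subseteq\mathcal P$. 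Thus $I$ is prime and proper (as $1\notin\mathcal P$ for $\mathcal P\in F$), so $I\in F$ and $F=\overline{\{I\}}$. Everything else is routine once Proposition 5.5 and the identity $D(f)\cap D(g)=D(fg)$ are in hand.
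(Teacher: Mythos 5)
Your argument is correct, and parts (1)--(4) run essentially parallel to the paper's proof: the quasi-compactness of $D(f)$ is obtained in both cases by translating a cover into the statement $f\in\bigcap_{S\subseteq\mathcal P}\mathcal P=r(\overline{<S>})$ via Proposition 5.5, extracting a finite sum witnessing $f^{n}\in\overline{<S>}$, and reversing the translation; your reduction to basic opens $D(g_i)$ first, and your use of $D(f)\cap D(g)=D(fg)$ in place of the paper's identity $W(I_1)\cup W(I_2)=W(I_1I_2)$ for part (4), are cosmetic variants. Where you genuinely diverge is in (5). The paper takes the candidate generic point to be $I:=\overline{<S>}$ and proves primeness by writing $F=(F\cap W(\{a\}))\cup(F\cap W(\{b\}))$ and invoking Proposition 5.5 to conclude $a\in I$ --- a step that, read literally, only yields $a\in r(I)$, so the paper's generic point should really be $r(I)$ (harmless, since $r$ of a saturated ideal is saturated by Lemma 5.4(ii), but it is a genuine imprecision). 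You instead define $I:=\bigcap_{\mathcal P\in F}\mathcal P$, observe directly that an intersection of saturated ideals is saturated and that $F=Pr_{s}(A)\cap W(I)$, and prove primeness from the ``two nonempty relatively open sets meet'' form of irreducibility together with $D(a)\cap D(b)=D(ab)$. This bypasses Proposition 5.5 entirely in part (5), automatically produces a radical ideal, and cleanly separates the topological input (irreducibility) from the algebra; the paper's version, by contrast, keeps everything phrased in terms of closed sets $W(\cdot)$ and leans on the root computation. Both are valid; yours is the tidier existence proof, and your uniqueness argument (via the computation of $\overline{\{\mathcal P_{0}\}}$, or the $T_{0}$ property) matches the paper's appeal to the $T_{0}$ separation axiom.
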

\begin{proof}
 \begin{enumerate}
  \item The openness of $D(f)$ is obvious.
  
Let us assume $D(f)=\bigcup_{i\in I}U_{i}$, where the $U_{i}$'s are open sets in $D(f)$. Each $U_{i}$
can be written as $$U_{i}=D(f)\cap V_{i}\,\, ,$$ for $V_{i}$ an open set in $Pr_{s}(A)$, \it i.e. \rm
$Pr_{s}(A)\setminus V_{i}=W(S_{i})$ for $S_{i}$ a subset of $A$. Then
$$
D(f)\subseteq \bigcup_{i\in I}V_{i}=Pr_{s}(A)\setminus(\bigcap_{i\in I}W(S_{i})) \,\, ,
$$
whence
$$
Pr_{s}(A)\cap W(\bigcup_{i\in I}S_{i})\subseteq W(\{f\})\,\, ,
$$
that is, setting
$$
S:=\bigcup_{i\in I}S_{i} \,\, ,
$$
$$
f\in \bigcap_{\mathcal P\in W(S)\cap Pr_{s}(A)}\mathcal P=\bigcap_{\mathcal P\in Pr_{s}(A) ; 
S\subseteq \mathcal P}\mathcal P \,\, .
$$
Therefore, by Proposition 5.5, $f\in r(\overline{<S>})$ : there is $n\geq 1$
such that $f^{n}\in \overline{<S>}$. Thus, there is $g\in <S>$ such that $f^{n}+g=g$ ;
one has $g=\sum_{j=1}^{m} a_{j}s_{j}$ for $a_{j}\in A$, $s_{j}\in S$ ; for each $j\in\{1,...,m\}$,
$s_{j}\in S_{i_{j}}$ for some $i_{j}\in I$. Let $S_{0}=\{s_{1},...,s_{m}\}$ ;
then $g\in <\bigcup_{j=1}^{n}S_{i_{j}}>$, whence $f^{n}\in\overline{<\bigcup_{j=1}^{m}S_{i_{j}}>}$,
and reading the above argument in reverse order with $S$ replaced by $\bigcup_{j=1}^{m}S_{i_{j}}$
yields that
$$
D(f)=\bigcup_{j=1}^{m}U_{i_{j}}\,\, ,
$$
whence the quasi--compactness of $D(f)$.
\item Let $U$ be an open set in $Pr_{s}(A)$,
and $\mathcal P\in U$. We have $Pr_{s}(A)\setminus U=Pr_{s}(A)\cap W(S)$ for some subset $S$ of $A$.
As $\mathcal P\notin W(S)$, $S\nsubseteq \mathcal P$, whence there is
an $s\in S$ with $s\notin \mathcal P$. It is now clear that $\mathcal P\in D(s)$ and
$$
D(s)\subseteq Pr_{s}(A)\setminus W(S)=U \,\, .
$$
\item Let $O\subseteq Pr_{s}(A)$ be open and quasi--compact ;
according to $(2)$, one may write $O=\bigcup_{j\in J}D(f_{j})$
with $f_{j}\in A$. But then, there is a finite subset $J_{0}$ of $J$
such that $O=\bigcup_{j\in J_{0}}D(f_{j})$. Now
\begin{eqnarray}
Pr_{s}(A)\setminus O \nonumber
&=&\bigcap_{j\in J_{0}} D(f_{j}) \nonumber \\
&=& Pr_{s}(A)\cap W(<f_{j}\vert j\in J_{0}>) \nonumber
\end{eqnarray}
is of the required type.

Conversely, if $Pr_{s}(A)\setminus O=Pr_{s}(A)\cap W(I)$ with $I=<g_{1},...,g_{n}>$, it is clear
that $O=\bigcup_{i=1}^{n}D(g_{i})$; as a finite union of quasi--compact subspaces of $Pr_{s}(A)$,
$O$ is therefore quasi--compact.

\item
Let $O_{1},...,O_{n}$ denote quasi--compact open subsets of $Pr_{s}(A)$ ; then, according to 
(iii), we may write
$$
Pr_{s}(A)\setminus O_{j}=Pr_{s}(A)\cap W(I_{j})
$$
for some finitely generated ideal $I_{j}$ of $A$.
Thus
\begin{eqnarray}
Pr_{s}(A)\setminus(O_{1}\cap ... \cap O_{m})
&=& \bigcup_{j=1}^{m}(Pr_{s}(A)\setminus O_{j}) \nonumber \\
&=& \bigcup_{j=1}^{m}(Pr_{s}(A)\cap W(I_{j})) \nonumber \\
&=&Pr_{s}(A)\cap \bigcup_{j=1}^{m}W(I_{j}) \nonumber \\
&=&Pr_{s}(A)\cap W(\prod_{j=1}^{m}I_{j}) \nonumber \\
&=& Pr_{s}(A)\cap W(I_{1}...I_{m}) \,\, ,\nonumber 
\end{eqnarray}
whence, according to (iii), $O_{1}\cap ... \cap O_{m}$ is
quasi--compact, as $I_{1}...I_{m}$ is finitely generated.
\item 
Let $F$ denote an irreducible closed set in $Pr_{s}(A)$ ; then $F=Pr_{s}(A)\cap W(S)$
for $S$ a subset of $A$. We have  seen above that, setting $I:=\overline{<S>}$, one has
$F=Pr_{s}(A)\cap W(I)$. As $F$ is not empty, $I\neq A$. Let us assume $ab\in I$ ;
then, for each $\mathcal P\in F$, one has $ab\in I\subseteq P$, whence $a\in \mathcal P$
or $b\in \mathcal P$, \it i.e. \rm $\mathcal P\in F\cap W(\{a\})$ or $\mathcal P\in F\cap W(\{b\})$ :
$$
F=(F\cap W(\{a\}))\cup (F\cap W(\{b\})) \,\, .
$$ 
As $F$ is irreducible, it follows that either $F=F\cap W(\{a\})$ or 
$F=F\cap W(\{b\})$.
In the first case we get $F\subseteq W(\{a\})$, \it i.e. \rm
$$
a\in \bigcap_{\mathcal P\in Pr_{s}(A) ; I\subseteq \mathcal P}\mathcal P=I
(\text{Proposition 5.5})\, ;
$$
similarly, in the second case, $b\in I$ : $I$ is prime.
But then
\begin{eqnarray}
\overline{\{I\}} \nonumber
&=&Pr_{s}(A)\cap W(I) \nonumber \\
&=&F \nonumber
\end{eqnarray}
and $I$ is a generic point for $F$.

It is unique as, in a $T_{0}$--space, an (irreducible) closed set admits \bf at most one \rm generic point
(see \cite{6}, (0.2.1.3)).
\end{enumerate}
\end{proof}

\begin{corollary}$Pr_{s}(A)$ and $MaxSpec(A)$ are spectral spaces
in the sense of Hochster (\cite{7},p. 43).
\end{corollary}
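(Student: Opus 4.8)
The plan is to recall Hochster's characterization of spectral spaces and then simply check each clause against the results already proved in this section, observing that all the substantive work is behind us. Recall from \cite{7} that a topological space $X$ is \emph{spectral} precisely when (a) $X$ is quasi-compact and $T_{0}$; (b) the quasi-compact open subsets of $X$ form a basis for the topology that is stable under finite intersections; and (c) $X$ is \emph{sober}, i.e. every nonempty irreducible closed subset has a unique generic point. Since Theorem 3.1 exhibits $Pr_{s}(A)$ and $MaxSpec(A)$ as homeomorphic, and each of these conditions is a topological invariant, it suffices to verify them for $Pr_{s}(A)$.

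First I would dispose of (a): this is exactly the content of Theorem 6.1. For (b), Proposition 6.2(2) gives that the quasi-compact open sets form an open basis (the basic opens $D(f)$ being quasi-compact by Proposition 6.2(1)), while Proposition 6.2(4) furnishes their stability under finite intersections. For (c), Proposition 6.2(5) supplies both the existence and the uniqueness of the generic point of each irreducible closed set. Assembling these three verifications yields the corollary for $Pr_{s}(A)$, and hence for $MaxSpec(A)$ via the homeomorphism of Theorem 3.1.

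I do not expect any genuine obstacle: the statement is a bookkeeping corollary whose content resides entirely in Theorem 6.1 and Proposition 6.2. The only point meriting a word of care is that the empty intersection, namely the whole space $Pr_{s}(A)$ itself, must count as a quasi-compact open set --- otherwise clause (b) alone would not force $X$ to be quasi-compact --- but this is precisely guaranteed by the quasi-compactness half of Theorem 6.1. One should likewise make sure that the formulation of Hochster's axioms one quotes matches the cited page of \cite{7}: different expositions package quasi-compactness of $X$ either as a standalone requirement or as the empty-intersection case of the basis condition, and in our setting either packaging is satisfied.
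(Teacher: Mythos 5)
Your proposal is correct and follows exactly the route the paper intends: the corollary is a direct assembly of Theorem 6.1 (quasi-compact and $T_{0}$), Proposition 6.2(1),(2),(4) (quasi-compact opens form a basis stable under finite intersections) and Proposition 6.2(5) (sobriety), transported to $MaxSpec(A)$ via the homeomorphism of Theorem 3.1. The paper leaves this assembly implicit, so your explicit verification, including the remark about the whole space counting as a quasi-compact open, is a faithful elaboration rather than a different argument.
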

\begin{theorem}(cf. \cite{6}, Corollary 1.1.14) Let $F=Pr_{s}(A)\cap W(S)$ be a nonempty closed set in $Pr_{s}(A)$ ;
then $F$ is homeomorphic to $Pr_{s}(B)$, where $B:=\displaystyle\frac{A}{\mathcal R_{I}}$
with $I:=\overline{<S>}$.
\end{theorem}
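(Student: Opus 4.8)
The plan is to exhibit an explicit homeomorphism between $F=Pr_{s}(A)\cap W(S)$ and $Pr_{s}(B)$, where $B=A/\mathcal{R}_{I}$ with $I=\overline{<S>}$, using the canonical projection $\pi:A\twoheadrightarrow B$. The natural candidate is the map $\tilde\pi$ restricted appropriately: by the functoriality developed in \S 4, $\pi^{-1}$ carries saturated prime ideals of $B$ to saturated prime ideals of $A$, and these land in $W(S)$ precisely because $I\subseteq\pi^{-1}(\overline{0})$ forces $S\subseteq I\subseteq\pi^{-1}(\mathcal{Q})$ for every saturated prime $\mathcal{Q}$ of $B$. So first I would define $\Phi:Pr_{s}(B)\to F$ by $\Phi(\mathcal{Q})=\pi^{-1}(\mathcal{Q})$ and check it is well-defined, i.e. that $\pi^{-1}(\mathcal{Q})$ is a saturated prime ideal of $A$ containing $S$.

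Next I would construct the inverse. Given $\mathcal{P}\in F$, so $\mathcal{P}$ is a saturated prime ideal with $I\subseteq\mathcal{P}$ (note $S\subseteq\mathcal{P}$ and $\mathcal{P}$ saturated give $\overline{<S>}\subseteq\mathcal{P}$, so the condition $I\subseteq\mathcal{P}$ is automatic), I would set $\Psi(\mathcal{P})=\pi(\mathcal{P})$. The key point is that because $I\subseteq\mathcal{P}$, the congruence $\mathcal{R}_{I}$ is coarser than the information defining $\mathcal{P}$, so $\mathcal{P}$ is a union of $\mathcal{R}_{I}$-classes and $\pi(\mathcal{P})$ is a genuine saturated prime ideal of $B$ with $\pi^{-1}(\pi(\mathcal{P}))=\mathcal{P}$. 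Verifying that $\Phi$ and $\Psi$ are mutually inverse reduces to the standard lattice-isomorphism fact that ideals (here saturated prime ideals) of $A$ containing $I=I(\mathcal{R}_{I})$ correspond bijectively to ideals of the quotient $B$; I expect this bookkeeping to be routine given the definitions of saturation and $\overline{<S>}$.

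For continuity and its converse, I would work with the closed sets. A basic closed set of $Pr_{s}(B)$ has the form $Pr_{s}(B)\cap W(T')$ for $T'\subseteq B$; choosing a preimage set $T\subseteq A$ with $\pi(T)=T'$, I expect $\Phi^{-1}(\text{closed})$ and $\Phi(\text{closed})$ to translate into closed sets of $F$ of the form $F\cap W(T)$ by the elementary equivalence $T'\subseteq\mathcal{Q}\iff T\cup S\subseteq\pi^{-1}(\mathcal{Q})$. Since the topology on $F$ is the subspace topology inherited from $W(S)\subseteq Pr(A)$, and the topology on $Pr_{s}(B)$ is generated by the $W(T')$, matching these families shows both $\Phi$ and $\Psi$ are continuous, hence $\Phi$ is a homeomorphism.

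The main obstacle I anticipate is verifying cleanly that $\pi(\mathcal{P})$ is \emph{saturated} as an ideal of $B$ and that the saturation operation commutes appropriately with $\pi$ — that is, controlling the interaction between $\overline{\phantom{x}}$ (closure of ideals) and the quotient map. One must confirm that for $\mathcal{P}$ saturated containing $I$, no extra elements get absorbed under $\pi$ and that $\overline{\pi(\mathcal{P})}=\pi(\mathcal{P})$; this is where the hypothesis $I=\overline{<S>}$ (rather than just $<S>$) is essential, since it guarantees $F=Pr_{s}(A)\cap W(I)$ with $I$ already saturated, exactly as established in the proof of Proposition~6.2(5). With that identification in hand, the correspondence is the congruence-quotient analogue of the classical statement that $\mathrm{Spec}$ of a quotient ring is the closed subscheme cut out by the corresponding ideal, and the remaining verifications are formal.
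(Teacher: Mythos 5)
Your proposal is correct and follows essentially the same route as the paper: the same map $\mathcal{Q}\mapsto\pi^{-1}(\mathcal{Q})$ from $Pr_{s}(B)$ onto $F$, inverted by $\mathcal{P}\mapsto\pi(\mathcal{P})$, with continuity in both directions checked by translating closed sets $W(\cdot)$ through $\pi$ and $\pi^{-1}$. The one point you defer to ``routine bookkeeping'' --- that $\pi(\mathcal{P})$ is a proper, prime, \emph{saturated} ideal of $B$ with $\pi^{-1}(\pi(\mathcal{P}))=\mathcal{P}$ --- is exactly the computation the paper carries out in detail, and you correctly flag it as the crux, so the plan is sound.
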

\begin{proof}
As seen above, one has $F=Pr_{s}(A)\cap W(I)$, whence, as $F\neq \emptyset$, $I\neq A$.
Let $A_{0}:=\displaystyle\frac{A}{\mathcal R_{I}}$, and let $\pi : A \rightarrow A_{0}$
denote the canonical projection.

Let us now define
\begin{eqnarray}\psi
&:& Pr_{s}(A_{0}) \rightarrow F \nonumber \\
&& \mathcal Q \mapsto \pi^{-1}(\mathcal Q) \,\, .\nonumber
\end{eqnarray}

Then $\psi$ is well--defined (as $\pi^{-1}(\mathcal Q)$ is a saturated prime ideal of $A$ that
contains $I$), and injective (as, for each $\mathcal Q\in Pr_{s}(A_{0})$,
$\pi(\psi(\mathcal Q))=Q$). 

Let $\mathcal P\in F$ ; then $\pi(\mathcal P)$ is an ideal of $A_{0}$.
Let us assume $\pi(v)\in \overline{\pi(\mathcal P)}$ ; then
$$
\pi(v)+\pi(a)=\pi(a)
$$
for some $a\in \mathcal P$,
that is
$$
\pi(a+v)=\pi(a)\,\, .
$$
But then
$$
a+v+i=a+i
$$
for some $i\in I$, whence
$$
v+(a+i)=a+i
$$
As $a+i\in \mathcal P$ and $\mathcal P$ is saturated,
it follows that $v\in \mathcal P$ : $\pi(\mathcal P)$ is saturated.

Furthermore , if $\pi(1)\in \pi(\mathcal P)$, one has $\pi(1)+\pi(v)=\pi(v)$ for some
$v\in \mathcal P$, whence there is $w\in I$ such that $1+v+w=v+w$,
whence $1+v+w\in \mathcal P$ and (as $\mathcal P$ is saturated) $1\in \mathcal P$
and $\mathcal P=A$, a contradiction. Therefore $\pi(\mathcal P)\neq A_{0}$.

Let us assume $\pi(x)\pi(y)\in \pi(\mathcal P)$ :
then $xy+i=q+i$ for some $i\in I$, whence
$$
(x+i)(y+i)=xy+xi+iy+i^{2}\in \mathcal P \,\, ,
$$
and $x+i\in \mathcal P$ or $y+i\in\mathcal P$ ; as $\mathcal P$ is saturated,
it follows that $x\in \mathcal P$ or $y\in \mathcal P$,
whence $\pi(x)\in \pi(\mathcal P)$ or $\pi(y)\in \pi(\mathcal P)$ : $\pi(\mathcal P)$ is prime.

As $\mathcal P$ is saturated, one sees in the same way that $\psi(\pi(\mathcal P))=\pi^{-1}(\pi(\mathcal P))=\mathcal P$, whence
$\psi$ is surjective.

Let $G:=F\cap W(S_{0})$ be closed in $F$ ;
then $\mathcal P\in \psi^{-1}(G)$ if and only if $\psi(\mathcal P)\in F \cap W(S_{0})$, that is
$S\subseteq \pi^{-1}(\mathcal P)$ and $S_{0}\subseteq \pi^{-1}(\mathcal P)$, \it i.e. \rm $\pi(S\cup S_{0})\subseteq \mathcal P$ :
$$
\psi^{-1}(G)=Pr_{s}(A_{0})\cap W(\pi(S\cup S_{0}))
$$
is closed in $F$, and $\psi$ is continuous.

Let now $H:=Pr_{s}(A_{0})\cap W(\bar{G})$ be closed in $Pr_{s}(A_{0})$, and let $\mathcal Q \in Pr_{s}(A_{0})$ ;
as $\pi$ is surjective, $\bar{G}\subseteq \mathcal Q$ if and only if
$\pi^{-1}(\bar{G})\subseteq \pi^{-1}(\mathcal Q)=\psi(\mathcal Q)$, and it follows that
$$
\psi(H)=F\cap W(\pi^{-1}(\bar{G}))
$$
is closed in $F$.
Therefore $\psi$ is an homeomorphism.
\end{proof}

\newpage

\section{Remarks on the one--generator case}
\label{sec:7}

Let us now consider the case of a nontrivial monogenic $B_{1}$--algebra containing strictly $B_{1}$, \it i.e. \rm $A=\displaystyle\frac{B_{1}[x]}{\sim}$
is a quotient of the free algebra $B_{1}[x]$ with $x\nsim 0$, $x\nsim 1$. Denote by $\alpha$ the image of $x$ in $A$ ;
then $\alpha\notin\{0,1\}$, and $\alpha$ generates $A$ as a $B_{1}$--algebra.

Let us suppose that, for some $(u,v)\in A^{2}$, $\alpha u=1+\alpha v$ ; then $\alpha$
is not nilpotent, as from $\alpha^{n}=0$ would follow 
$$
0=\alpha^{n}v=\alpha^{n-1}(\alpha v)
=\alpha^{n-1}(1+\alpha u)=\alpha^{n-1}+\alpha^{n}u=\alpha^{n-1}\,\, ,
$$
whence $\alpha^{n-1}=0$ and, by induction on $n$, $1=\alpha^{0}=0$, a contradiction.

Therefore three cases may appear
\begin{enumerate}
\item[(i)]$\alpha$ is nilpotent.
\item[(ii)]$\alpha$ is not nilpotent and there does not exist $(u,v)\in A^{2}$
such that $\alpha u=1+\alpha v$.
\item[(iii)]($\alpha$ is not nilpotent) and there exists $(u,v)\in A^{2}$
such that $\alpha u=1+\alpha v$.
\end{enumerate}

In case (i), any prime ideal of $A$ must contain $\alpha$, hence contain $\alpha A$;
the ideal $\alpha A$ is, according to the above remark, saturated, and is not contained in a 
strictly bigger saturated ideal other than $A$ itself (in both cases, as any element of $A$ not
in $\alpha A$ is of the shape $1+\alpha x$). Therefore $Pr_{s}(A)=\{\alpha A\}$, whence
$Nil(A)=\alpha A$. In this case we see that
$$
\displaystyle\frac{A}{\mathcal R_{Nil(A)}}\simeq B_{1}\,\, .
$$

In cases (ii) and (iii), no power of $\alpha$ belongs to $Nil(A)$ ; as
$Nil(A)$ is saturated, it follows that $Nil(A)=\{0\}$. In fact, $A$ is integral,
whence $\{0\}\in Pr_{s}(A)$.
If $\mathcal P\in Pr_{s}(A)$ and $\mathcal P\neq \{0\}$, then $\mathcal P$ contains
some power of $\alpha$, hence contains $\alpha$, hence contains $\alpha A$.
As above we see that $\mathcal P=\alpha A$ ; but, in case (iii), $\alpha A$
is not saturated. In case (ii) it is easy to see that $\alpha A$ is prime and saturated.
Therefore 
\begin{enumerate}
\item In case (ii), $Pr_{s}(A)=\{\{0\},\alpha A\}$ ; $\{0\}$ is a generic point,
that is
$$
\overline{\{\{0\}\}}=Pr_{s}(A)\,\, ,
$$
 and $\alpha A$ a \lq\lq closed point\rq\rq
($\{\alpha A\}$ is closed) ;
\item In case (iii), $Pr_{s}(A)=\{\{0\}\}$.
\end{enumerate}

One may remark that $B_{1}[x]$ itself falls into case (ii).

In \cite{9}, pp. 75--79, we have enumerated (up to isomorphism) monogenic 
$B_{1}$--algebras of cardinality $\leq 5$. It is easy to see 
where these algebras fall in the above classification ; we keep the numbering used
in \cite{9}. Let then $3\leq \vert A \vert \leq 5$.
We have the following repartition

Case $(i)$ : (6),(8),(12),(15),(18),(24)

Case $(ii)$: (7),(10),(11),(16),(19),(25),(26)

Case $(iii)$: (5),(9),(13),(14),(17),(20),(21),(22),(23),(27),(28)

\newpage
\section{Acknowledgement}
This paper was written during a stay at I.H.E.S. (December 2010-March 2011) ; a preliminary version appeared as
prepublication IHES/M/11/06.
I am grateful both to the staff and to the colleagues who managed to make this stay pleasant and stimulating.
I am also deeply indebted to Profesor Alain Connes for his constant moral support.

I am very grateful for the invitations to lecture upon this work at the JAMI conference \it Noncommutative geometry
and arithmetic \rm(Johns Hopkins University, March 22-25 2011), and at the \it AGATA Seminar \rm (Montpellier, April 7, 2011) ; for these, I thank respectively Professors Alain Connes and Caterina Consani, and Professor Vladimir Vershinin.

\bibliographystyle{elsarticle-num}
\bibliography{<your-bib-database>}

\begin{thebibliography}{}
\expandafter\ifx\csname url\endcsname\relax
  \def\url#1{\texttt{#1}}\fi
\expandafter\ifx\csname urlprefix\endcsname\relax\def\urlprefix{URL }\fi
\expandafter\ifx\csname href\endcsname\relax
  \def\href#1#2{#2} \def\path#1{#1}\fi

\end{thebibliography}


\begin{thebibliography}{00}

 
\bibitem {1} D. Castella \textit{L'alg\`ebre tropicale comme alg\`ebre de la caract\'eristique 1 : polyn\^omes rationnels et fonctions polynomiales},
preprint ; arXiv : 0809.0231 v1
 
\bibitem {2} A. Connes and C. Consani \textit{Schemes over $F_{1}$ and zeta functions}, Compositio Mathematica 146 (6)(2010), pp.1383--1415. 

\bibitem {3} A. Connes and C. Consani \textit{Characteristic one, entropy and the absolute point}, to appear on the Proceedings of the JAMI Conference 2009, JHUP (2011). 

\bibitem {4} A. Connes and C. Consani \textit{The hyperring of adele classes}, Journal of Number Theory 131(2011), pp. 159--194.

\bibitem {5} A.Deitmar \textit{Schemes over $F_{1}$}, in {\it Number Fields and Function Fields - two parallel worlds}, pp. 87-100, Birkha\"user, Boston, 2005.

\bibitem {6} A. Grothendieck, J.A. Dieudonn\'e \textit{El\'ements de G\'eom\'etrie Alg\'ebrique I},
Publ. Math. IHES, No.4, 1960.

\bibitem {7} M. Hochster \textit{Prime ideal structure in commutative rings}, Trans. Amer. Math. Soc.
142(1969), pp. 43-60 .

\bibitem {8} S. LaGrassa \textit{Semirings : ideals and polynomials}, PhD thesis, University of Iowa, 1995.

\bibitem {9} P. Lescot \textit{Alg\`ebre Absolue}, 
   Ann. Sci. Math. Qu\'ebec 33(2009), no 1, pp. 63-82.
   
\bibitem {10} P. Lescot \textit{Absolute Algebra II-Ideals and Spectra},
Journal of Pure and Applied Algebra 215(7), 2011, pp. 1782--1790.


\bibitem{11}
  A. Zelevinsky \textit{Representations of Finite Classical Groups. A Hopf Algebra Approach}, 
  	Lecture Notes in Mathematics, 869, Springer-Verlag, Berlin-New York, 1981, 184 pp.
   
\bibitem{12}
  Y. Zhu
    \textit{Combinatorics and characteristic one algebra}, preprint, 2000. 


    

 

\end{thebibliography}
\newpage

\section{Bibliography}

\end{document}